\definecolor{greenText}{rgb}{0.5, 0.7, 0.5}
\definecolor{greyText}{rgb}{0.5, 0.5, 0.5}
\definecolor{codeFrame}{rgb}{0.7, 0.5, 0.5}
\definecolor{backgroundColor}{rgb}{0.95, 0.95, 0.92}
\lstdefinestyle{code} {
  frame=single,
  rulecolor=\color{greenText},
  numberstyle=\tiny\color{greyText},
  commentstyle=\color{greenText},
  basicstyle=\linespread{1}\ttfamily\footnotesize,
  keywordstyle=\ttfamily\footnotesize,
  showstringspaces=false,
  numbers=left,
  numbersep=8pt,
  xleftmargin=1.95em,
  framexleftmargin=1.6em }
\let\mc\mathcal
\newcommand{\ul}{\underline}                            
\renewcommand{\l}{\left}                                
\renewcommand{\r}{\right}                               
\newcommand{\la}{\langle}                               
\newcommand{\ra}{\rangle}                               
\newcommand{\p}{\prime}                                 
\newcommand{\tl}{\triangleleft}                         
\newcommand{\tr}{\triangleright}                        
\renewcommand{\a}{\alpha}                               
\renewcommand{\b}{\beta}                                
\renewcommand{\d}{\delta}                               
\renewcommand{\epsilon}{\varepsilon}                    
\newcommand{\e}{\epsilon}                               
\renewcommand{\phi}{\varphi}                            
\newcommand{\Id}{\text{Id}}                             
\newcommand{\Inn}{\text{Inn}}                           
\newcommand{\Aut}{\text{Aut}}                           
\newcommand{\Conj}{\text{Conj}}                         
\newcommand{\Adconj}{\text{Adconj}}                     
\newcommand{\Tak}{\text{Tak}}                           
\newcommand{\pInn}{\widehat{\text{Inn}}}
\theoremstyle{definition}
\newtheorem{theorem}{Theorem}[section]
\newtheorem{proposition}[theorem]{Proposition}
\newtheorem{lemma}[theorem]{Lemma}
\newtheorem{definition}[theorem]{Definition}
\newtheorem{example}[theorem]{Example}
\newtheorem{counterexample}[theorem]{Counterexample}
\newtheorem{remark}[theorem]{Remark}
\pgfplotsset{compat=1.18}
\begin{document}

\title[Profinite Quandles]{On Profinite Quandles}

\author[1]{Alexander W. Byard}
\affil[1]{\orgdiv{Department of Mathematics},\orgname{ University of Chicago}, \street{Eckhart Hall,
5734 S University Ave}, \city{Chicago}, \state{IL} \postcode{60637}, \country{USA}}
\email{abyard@uchicago.edu}

\author[2]{Brian Cai}
\affil[2]{\orgdiv{Department of Mathematics},\orgname{University of Florida},
\street{P.O.\ Box 118105},
\city{Gainesville}, \state{FL} \postcode{32611} \country{USA}}
\email{brianfl2017@gmail.com}

\author[3]{Nathan P. Jones}
\affil[3]{\orgdiv{Department of Mathematics},\orgname{ University of Kentucky},\street{ Patterson Office Tower, 120 Patterson Dr.},\city{ Lexington},\state{ KY} \postcode{ 40508},\country{ USA}}
\email{Nathan.Jonesviolin@uky.edu}

\author[4]{Lucy H. Vuong}
\affil[4]{\orgdiv{Department of Mathematics},\orgname{Harvard University},
\street{Science Center Room 325,
1 Oxford Street},
\city{Cambridge}, \state{MA} \postcode{02138} \country{USA}}
\email{lucyvuong@college.harvard.edu}

\author*[5]{David N. Yetter}
\affil*[5]{\orgdiv{Department of Mathematics},
\orgname{Kansas State University},
\street{1228 N. Martin Luther King, Jr. Drive},
\city{Manhattan}, \state{KS} \postcode{66506} \country{USA}}
\email{dyetter@ksu.edu}



\maketitle

\begin{abstract}
    s We undertake the study of profinite quandles. A quandle $(Q, \triangleleft)$ is a set $Q$ with a operation $\triangleleft$ that satisfies idempotency, right invertibility, and right self-distributivity axioms. Profinite quandles are inverse limits of inverse systems of finite quandles and are closely related to profinite groups. As with all profinite models of algebraic theories, they are naturally equipped with the structure of a Stone space, with respect to which structure their operations are continuous. We provide several constructions of profinite quandles from profinite groups, and from other profinite quandle.  We characterize which subquandles of profinite quandles are again profinite. Finally, we provide a characterization of algebraically connected profinite quandles in terms of the profinite completion of their inner automorphism groups $\widehat{\Inn(Q)}$. It is anticipated that the results herein will find applications to the \'{e}tale homotopy theory of number fields.
\end{abstract}


\section{Introduction}

Quandles were introduced by Joyce in his 1982 seminal paper \textit{A classifying invariant of knots, the knot quandle} as an algebraic invariant on classical knots and links.~\cite{JOYCE198237} However, the earliest known example of a quandle dates back to 1940s Japan when Mituhisa Takasaki defined a kei, now known as an involutory quandle, in an attempt to find a non-associative algebraic structure capable of describing reflections in finite geometry.~\cite{takasaki1943abstractions} Since then, quandles have been extensively studied in relation to classical knots and links~\cite{Carter2003quandle, book, Carter2004SurfacesI4, nelson2014link, article, Inoue2001QUANDLEHO} as well as other mathematical structures. For instance Zablow \cite{Zablow2003} showed that the Dehn twists of any orientable surface form a quandle, while Crans \cite{crans2004lie} showed that quandles arise naturally in relating a Lie group and its Lie algebra.  Yetter \cite{yetter2003} showed that many monodromy phenomena arising in geometric topology and algebraic geometry are more naturally described by quandle homomorphisms rather than group homomorphisms with unalgebraic side-conditions as had been done classically. For excellent introductory articles on quandles, see~\cite{carter2012survey}.

A quandle is a set $Q$ with two binary operations satisfying idempotency, right-invertibility, and right-distributivity axioms analogous to the Reidemeister moves in knot theory. A subquandle of $Q$ is a subset closed under the operations of $Q$, and the collection of subquandles of $Q$ forms a lattice. In their 2018 paper, Saki and Kiani demonstrated that the lattice of subquandles of a finite quandle is complemented and gave an example of an infinite quandle that does not have a complemented subquandle lattice.~\cite{saki2021complemented} Amsberry et al.  constructed an ind-finite quandle without a complemented subquandle lattice, but conjectured that the subquandle lattice of a profinite will be complemented.~\cite{amsberry2023complementation}

Our original motivation in studying profinite quandles was to resolve a conjecture of Amsberry et al.~\cite{amsberry2023complementation} which remains unresolved. We became aware, however, that the general theory of profinite quandles is of some importance as the \'{e}tale homotopy of number fields naturally gives rise to profinite quandles, which have been studied by Davis and Schlank \cite{Schlank2023}. 
In this paper we introduce the theory of profinite quandles. We show that products and finite disjoint unions preserve profiniteness and give a topological characterization for when a subquandle of a profinite quandle is profinite. Finally, we provide a version of algebraic connectedness appropriate to profinite quandles and characterize the structure of algebraically connected profinite quandles in terms of the profinite completion of their inner automorphism groups.  

While this paper was in its final preparation, we became aware that Singh \cite{S23} had independently derived a number of basic results about profinite quandles, mostly distinct from our own.  We note in footnotes those of our results which are also found in \cite{S23}.

\section{Basic Quandle Theory}

\begin{definition}[Quandle]
A quandle $(Q, \triangleleft, \triangleleft^{-1})$ is a set $Q$ equipped with two binary operations $\triangleleft,\triangleleft^{-1}:Q\times Q\to Q$ satisfying the following axioms for all $x,y,z \in Q$:
\begin{enumerate}[label=\textbf{(Q\arabic*)}]
\item $x \triangleleft x = x$,
\item $(x \triangleleft y) \triangleleft^{-1} y = x = (x \triangleleft^{-1} y) \triangleleft y$,
\item $(x \triangleleft y) \triangleleft z = (x \triangleleft z) \triangleleft (y \triangleleft z)$.
\end{enumerate}
In other words $\triangleleft$ is idempotent, right-invertible, and right-distributive.
\end{definition}

Notice that this definition describes quandles as models of an equational theory. Quandles can be equivalently defined using a first order theory with a single operation $\triangleleft$ since (\textbf{Q2}) is equivalent to the condition that for all $y\in Q$ the map
$x \mapsto x\triangleleft y$ is a bijection. This allows for the omission of $\triangleleft^{-1}$ when convenient.

An algebraic structure $(Q,\triangleleft)$ satisfying (\textbf{Q2}) and (\textbf{Q3}) is called a rack.

\begin{example}[Tait quandle]
Let $Q = \{1,2,3\}$ and define $\triangleleft : Q \times Q \to Q$ by the following operation table:

    \begin{center}
    \begin{tabular}{c | c c c }
        $\triangleleft$ & 1 & 2 & 3  \\
        \hline
        1 & 1 & 3 & 2  \\
        2 & 3 & 2 & 1  \\
        3 & 2 & 1 & 3  
    \end{tabular}
    \end{center}

With this operation $(Q,\triangleleft)$ forms the Tait Quandle.
\end{example}

\begin{example}
    (Trivial quandle). Let $S$ be a set and define $\triangleleft:S \times S \to S$ by $(x,y) \mapsto x$ for all $x,y \in S$.  Then $\triangleleft$ defines the trivial quandle structure on a set $S$.
\end{example}

\begin{example}[Conjugation quandle]
Let $G$ be a group. We define the operation $\triangleleft : G \times G \to G$ be given by $(g,h)\mapsto h^{-1}gh$ for all $g,h\in G$. The group $G$ with this operation defines a quandle $(G, \triangleleft)$, denoted by $\Conj(G)$, called the conjugation quandle on $G$.
\end{example}

Conj defines a functor from the category $\textbf{Gp}$ of groups to the category $\textbf{Qndl}$ of quandles. Furthermore, Conj admits a left adjoint, denoted Adconj. For all quandles $Q$ we have that $\Adconj(Q):=F(Q)/\sim$, where $F(Q)$ is the free group generated by $Q$ and $x \triangleleft y \sim y^{-1}xy$. \cite{JOYCE198237} In some literature, $\Adconj(Q)$ is also referred to as the universal augmentation group of $Q$.

\begin{definition}[Involutory quandle]
An involutory quandle or kei is a quandle $(Q,\triangleleft)$ satisfying $(x \triangleleft y) \triangleleft y = x$ for all $x,y \in Q$.
\end{definition}

\begin{example}[Takasaki quandle]
The Takasaki quandle is named after Takasaki's early work on kei.~\cite{takasaki1943abstractions} Let $A$ be an abelian group and define the operation $\triangleleft:A\times A\to A$ by $(x,y)\mapsto2y-x$. Together $(A,\triangleleft)$ defines a kei, denoted by $\Tak(A)$, called the Takasaki kei on $A$, or the Takasaki quandle on $A$.
\end{example}

Tak defines a functor from the category $\textbf{Ab}$ of abelian groups to the category $\textbf{Kei}$ of kei. Moreover, Tak admits a left adjoint, denoted AdTak. For all involutory quandles $Q$, we have that $\text{AdTak}(Q):=\la Q\ra_{\text{ab}}/\sim$, where $\la Q\ra_{\text{ab}}$ is the free abelian group generated by $Q$ and $x \triangleleft y \sim 2y - x$.~\cite{amsberry2023complementation}

\begin{example}[Core quandle]
The Takasaki quandle on abelian groups can be generalized to all groups.  For a group $G$, define the operation $\triangleleft:G\times G\to G$ by $(x,y)\mapsto xy^{-1}x$.  The pair $(G,\triangleleft)$ defines a kei structure on $G$ denoted Core($G$).  
\end{example}

\begin{definition}[Subquandle]
For a quandle $(Q, \triangleleft)$, a subset $Q' \subseteq Q$ is a subquandle of Q provided $(Q', \triangleleft)$ is a quandle, i.e. $Q^\p$ is closed under $\triangleleft|_{Q^\p}$ and $\triangleleft^{-1}|_{Q^\p}$. We write $Q^\prime\preceq Q$ to denote that $Q^\prime$ is a subquandle of $Q$. When $Q^\p$ is a proper subquandle of $Q$, we write $Q^\prime\prec Q$.
\end{definition}

\begin{proposition}
Let $Q$ be a quandle. If $Q_1$ and $Q_2$ are subquandles of $Q$, then $Q_1\cap Q_2$ is a subquandle of $Q$.
\end{proposition}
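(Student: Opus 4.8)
The plan is to verify directly that $Q_1 \cap Q_2$ meets the definition of a subquandle, namely that it is a subset closed under both $\triangleleft$ and $\triangleleft^{-1}$. The key observation simplifying the task is that axioms (\textbf{Q1})--(\textbf{Q3}) are universally quantified equations that already hold for every element of $Q$; they therefore hold automatically for every element of any subset. Consequently the only genuine content to check is that $Q_1 \cap Q_2$ is closed under the two operations, after which $(Q_1 \cap Q_2, \triangleleft)$ is a quandle essentially by inheritance.

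The central step is a short two-fold membership argument. First I would fix arbitrary $x, y \in Q_1 \cap Q_2$. Then in particular $x, y \in Q_1$, and since $Q_1 \preceq Q$ is by definition closed under $\triangleleft$ and $\triangleleft^{-1}$, both $x \triangleleft y$ and $x \triangleleft^{-1} y$ lie in $Q_1$. Running the identical argument with $Q_2$ in place of $Q_1$ shows that $x \triangleleft y$ and $x \triangleleft^{-1} y$ also lie in $Q_2$. Combining the two conclusions yields $x \triangleleft y \in Q_1 \cap Q_2$ and $x \triangleleft^{-1} y \in Q_1 \cap Q_2$, which is exactly the required closure.

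Because closure is the only nontrivial requirement and the axioms are inherited from $Q$, this completes the argument; the statement is a straightforward closure fact and I do not expect any real obstacle. The one point worth flagging is the status of the empty set: if $Q_1$ and $Q_2$ happen to be disjoint, their intersection is empty, which satisfies (\textbf{Q1})--(\textbf{Q3}) vacuously and so still qualifies as a (possibly empty) subquandle under the definition given. If one instead insisted that subquandles be nonempty, the proposition would need a disjointness-excluding hypothesis, but as the definition stands no such caveat is required.
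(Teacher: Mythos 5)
Your proof is correct, and it is exactly the routine closure argument the paper implicitly relies on: the paper states this proposition without proof, treating it as immediate. Your handling of the empty intersection is also consistent with the paper's definition of subquandle, which imposes no nonemptiness requirement.
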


\begin{definition}
Let $(Q,\triangleleft)$ be a quandle. Given a subset $S \subseteq Q$, the subquandle $\la S\ra$ generated by $S$ is given by
\[\la S \ra := \bigcap_{S \subseteq Q^\prime \preceq Q} Q^\prime\]
\end{definition}

\begin{definition}[Quandle homomorphism]
Let $(Q_1,\triangleleft_1)$ and $(Q_2,\triangleleft_2)$ be quandles. A quandle homomorphism $\phi$ from $Q_1$ to $Q_2$ is a map $\varphi:Q_1\to Q_2$ such that $\varphi(x \triangleleft_1 y)=\varphi(x)\triangleleft_2\varphi(y)$. A quandle isomorphism is a bijective quandle homomorphism.
\end{definition}

\begin{definition}[Automorphism group of a quandle]
Let $Q$ be a quandle. A quandle isomorphism $\varphi:Q\to Q$ between $Q$ and itself is called a quandle automorphism of $Q$. The set of quandle automorphisms
\[\Aut(Q) := \{\varphi : Q \to Q : \varphi \text{ is an automorphism of $Q$}\}\]
of $Q$ forms a group called the automorphism group of $Q$.

Let $y\in Q$. The symmetry $S_y:Q\to Q$ at $y$ is given by $S_y(x)=x\tl y$ for all $x\in Q$. It is a quandle automorphism by axiom (\textbf{Q3}). The group
\[\Inn(Q):=\la S_y\ra_{y\in Q}\]
generated by all symmetries of $Q$ is called the inner automorphism group of $Q$. It is a theorem of Joyce \cite{JOYCE198237} that $\Inn(Q)$ is a normal subgroup of $\Aut(Q)$.
\end{definition}

Finally, we recall 

\begin{definition}
A quandle $Q$ is said to be \textit{algebraically connected} if there is exactly one orbit under right action by $\Inn(Q)$.
\end{definition}

In the context of finite quandles, algebraically connected quandles form ``building blocks'' from which all finite quandles can be iteratively assembled using semi-disjoint unions, that is quandle structures on the disjoint union, the operations of which restrict to the given ones on the the summands (see \cite{ehrman2008toward} for a formal definition).


In \cite{ehrman2008toward}, Ehrman et al. established a characterization of algebraically connected finite quandles. First, a proposition following from properties of group actions.

\begin{proposition}
Let $Q$ be a connected quandle on $n$ elements. Then $n$ divides $|\Inn(Q)|$ and any choice of $q \in Q$ induces a $\Inn(Q)$-equivariant bijection between $H\backslash\Inn(Q),$ where $H$ is the stabilizer of $q$ under action by $\Inn(Q)$.
\end{proposition}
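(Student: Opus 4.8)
The plan is to recognize this statement as nothing more than the orbit--stabilizer theorem applied to the action that connectedness makes transitive, together with the elementary counting consequence of that bijection. The one piece of set-up I must get right is the handedness: the symmetries $S_y(x) = x \tl y$ compose so that $\Inn(Q)$ acts naturally on the \emph{right} of $Q$, and the proposition's use of the \emph{right} coset space $H\backslash\Inn(Q)$ is chosen to match this. (I read the statement as asserting a bijection between $Q$ and $H\backslash\Inn(Q)$; the ``$Q$ and'' appears to have dropped out.)

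First I would fix $q \in Q$, write $H := \{g \in \Inn(Q) : q \cdot g = q\}$ for its stabilizer, and define the orbit map
\[
\Phi : H\backslash\Inn(Q) \longrightarrow Q, \qquad \Phi(Hg) = q \cdot g.
\]
The verifications are routine. For well-definedness, if $Hg = Hg'$ then $g = hg'$ with $h \in H$, so $q \cdot g = (q \cdot h)\cdot g' = q \cdot g'$. For injectivity, $q\cdot g = q\cdot g'$ gives $q\cdot\!\big(g(g')^{-1}\big) = q$, hence $g(g')^{-1}\in H$ and $Hg = Hg'$. Surjectivity is exactly the statement that the $\Inn(Q)$-orbit of $q$ is all of $Q$, which is precisely what algebraic connectedness of $Q$ guarantees (a single orbit). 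Finally, equivariance with respect to right multiplication on cosets follows from associativity of the action: $\Phi\big((Hg)\cdot g_0\big) = \Phi\big(H(gg_0)\big) = q\cdot(gg_0) = (q\cdot g)\cdot g_0 = \Phi(Hg)\cdot g_0$.

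With the bijection $\Phi$ in hand, the divisibility claim is immediate by counting: since $Q$ is finite with $|Q| = n$, the bijection yields $n = |H\backslash\Inn(Q)| = [\Inn(Q):H] = |\Inn(Q)|/|H|$, so $|\Inn(Q)| = n\,|H|$ and hence $n$ divides $|\Inn(Q)|$. I do not expect a genuine obstacle here, as the argument is a direct instance of orbit--stabilizer; the only point demanding care is bookkeeping the left/right conventions so that the stabilizer cosets appear as $H\backslash\Inn(Q)$ rather than $\Inn(Q)/H$ and so that the equivariance is stated for the correct side. The essential input from the hypotheses is the single-orbit condition, which is what upgrades the general orbit--stabilizer bijection onto $Q$ itself.
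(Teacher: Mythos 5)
Your proof is correct and is exactly the argument the paper has in mind: the paper states this proposition without proof, noting only that it follows ``from properties of group actions,'' i.e.\ the orbit--stabilizer theorem applied to the transitive right action of $\Inn(Q)$ that connectedness provides. Your handling of the right-action/right-coset conventions, the repair of the statement's evident typo (the missing ``$Q$ and''), and the Lagrange-style counting $n = [\Inn(Q):H]$ are all as intended.
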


From the proposition, we may represent $Q$ by $(\{Hg_1,...,Hg_n\},\triangleleft)$ where $Hg_i \triangleleft Hg_j = Hg_k$ if $q_i \triangleleft q_j = q_k$. Define an augmentation map $|\cdot| : Q = H\backslash\Inn(Q) \to \Inn(Q)$ by $|Hg_i| = g$ where $g \in \Inn(Q)$ such that $x \cdot g = x \triangleleft Hg_i$ for all $x \in Q$. To distinguish between the augmentation map and the order of a group, we will denote $|H|$ as the order of the subgroup $H$ and $|Hh|$ as the augmentation of $H$ as a right coset in $H \backslash \Inn(Q)$.

\begin{theorem}{\cite{ehrman2008toward}}
Let $Q$ be an algebraically connected quandle on $n$ elements, let $H$ be the stabilizer of $q \in Q$ under action by $\Inn(Q)$, and let $\{g_2,...,g_n\}$ be all coset representatives of $H$ not in $H$. Then $H \subset \Inn(Q) \subset \mathfrak{S}_n$, $\frac{|G|}{|H|} = n$, $|Hh| \in Z(H)$, and $\Inn(Q)$ is generated by $\{|Hh|,|Hg_2|,...,|Hg_n|\}$, where $|Hg_i| = g_i^{-1}|Hh|g_i$ for $2 \leq i \leq n$.
\end{theorem}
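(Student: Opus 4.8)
The plan is to translate every assertion through the $\Inn(Q)$-equivariant bijection $Q \cong H\backslash\Inn(Q)$ furnished by the preceding proposition, and to reduce the whole statement to the orbit--stabilizer theorem together with a single conjugation identity for symmetries. Write $G := \Inn(Q)$, choose the representative $g_1 = e$ so that the coset $H = Hg_1$ corresponds to $q = q_1$, and observe that under the equivariant bijection $Hg_i$ corresponds to $q_i := q\cdot g_i$. I would first record the key identification: by definition of the augmentation, $|Hg_i|$ is the unique element of $G$ whose right action is $x \mapsto x\triangleleft q_i$; since the elements of $G\subseteq\mathfrak{S}_n$ are honest permutations of $Q$ and are therefore determined by their action, this forces $|Hg_i| = S_{q_i}$. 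In particular $|Hh| = |H| = S_q$.

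The structural statements then follow quickly. Every element of $\Aut(Q)$ is a bijection of the $n$-element set $Q$, so $H \le \Inn(Q) \le \Aut(Q) \subseteq \mathfrak{S}_n$, giving $H \subset \Inn(Q) \subset \mathfrak{S}_n$. Because $Q$ is algebraically connected, the $G$-orbit of $q$ is all of $Q$, so orbit--stabilizer yields $|G|/|H| = [G:H] = n$, which is exactly the content of the preceding proposition. Finally, as $i$ runs over $1,\dots,n$ the elements $q_i$ exhaust $Q$, hence $\{|Hh|,|Hg_2|,\dots,|Hg_n|\} = \{S_{q_1},\dots,S_{q_n}\} = \{S_y : y\in Q\}$, which generates $\Inn(Q)$ by the very definition of the inner automorphism group.

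It remains to prove the two identities, and both flow from the single conjugation formula
\[
 g^{-1}\,S_y\,g \;=\; S_{y\cdot g}\qquad (g\in G,\ y\in Q).
\]
I would establish this by a direct computation: applying the left-hand side to an arbitrary $x\in Q$, pushing $g$ through $\triangleleft$ using that $g$ is an automorphism (axiom (\textbf{Q3})), and simplifying, one obtains $x\triangleleft(y\cdot g)$, which is precisely the action of $S_{y\cdot g}$. Granting this, the relation $|Hg_i| = g_i^{-1}\,|Hh|\,g_i$ is immediate: with $y = q$ and $g = g_i$ the formula reads $g_i^{-1}S_q\,g_i = S_{q\cdot g_i} = S_{q_i} = |Hg_i|$, while $|Hh| = S_q$. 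For centrality, first note $S_q \in H$ since $q\cdot S_q = q\triangleleft q = q$ by idempotency (\textbf{Q1}); then for any $\alpha \in H$ we have $q\cdot\alpha = q$, so the conjugation formula gives $\alpha^{-1}S_q\alpha = S_{q\cdot\alpha} = S_q$, i.e.\ $S_q$ commutes with every element of $H$. Hence $|Hh| = S_q \in Z(H)$.

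The only real subtlety --- and the step I would be most careful with --- is the bookkeeping of the left/right action conventions: one must fix the side on which $\Inn(Q)$ acts on $Q$ and the resulting (anti-)homomorphism between multiplication in $\Inn(Q)$ and composition of automorphisms, so that the conjugation formula emerges in the form $g^{-1}(\cdot)g$ with $y\cdot g$ rather than its inverse, matching the stated relation $|Hg_i| = g_i^{-1}|Hh|g_i$. Once these conventions are pinned down to agree with the definition of the augmentation map, the computation is routine and the four assertions follow as above.
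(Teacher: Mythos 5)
Your proof is correct, and since the paper states this theorem only as a citation to Ehrman et al.\ without reproducing a proof, the right comparison is with the paper's proof of its profinite analogue, which follows exactly your route: the coset augmentations $\{|Hh|,|Hg_2|,\dots,|Hg_n|\}$ are precisely the symmetries $S_{q_1},\dots,S_{q_n}$, hence generate $\Inn(Q)$ by definition, while $|Hg_i|=g_i^{-1}|Hh|g_i$ and $|Hh|\in Z(H)$ both drop out of the conjugation identity $|Hg_\delta g|=g^{-1}|Hg_\delta|g$ (Claim 4.4 of Ehrman et al., i.e.\ axiom (\textbf{AQ2}) for the augmentation). The only difference is that you prove that conjugation identity directly from the fact that $\Inn(Q)$ acts by quandle automorphisms --- with the left/right action bookkeeping correctly flagged --- where the paper simply imports it, so your argument is, if anything, more self-contained.
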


\begin{theorem}{\cite{ehrman2008toward}}
Suppose that for groups $G$ and $H$, we have $H \subset G \subset \mathfrak{S}_n$ and $\frac{|G|}{|H|} = n$ for $n \in \mathbb{N}$. Let $g_1,...,g_n$ be coset representatives of $H$ in $G$ and further suppose that $G$ is generated by $\{g_1^{-1}hg_1,...,g_n^{-1}hg_n\}$ for some $h \in Z(H)$. Then $(H \backslash G, \triangleleft)$ where $Hg_i \triangleleft Hg_j := Hg_ig_j^{-1}hg_j$ defines an algebraically connected quandle on $n$ elements.
\end{theorem}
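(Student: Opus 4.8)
The plan is to verify, in the following order, that $\triangleleft$ is well-defined on the coset space, that it satisfies the three quandle axioms, that the resulting quandle has exactly $n$ elements, and finally that it is algebraically connected. Throughout I would set $\mu(Hg_j) := g_j^{-1}hg_j \in G$ and rewrite the operation as $Hg_i \triangleleft Hg_j = Hg_i \cdot \mu(Hg_j)$, that is, as right multiplication of the coset $Hg_i$ by the group element $\mu(Hg_j)$. Well-definedness then splits into two independent checks. Independence from the representative of the left factor is immediate, since replacing $g_i$ by $bg_i$ with $b \in H$ leaves $Hbg_i\mu(Hg_j) = Hg_i\mu(Hg_j)$ unchanged because $Hb = H$. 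Independence from the representative of the right factor is exactly where the hypothesis $h \in Z(H)$ enters: replacing $g_j$ by $ag_j$ with $a \in H$ gives $(ag_j)^{-1}h(ag_j) = g_j^{-1}(a^{-1}ha)g_j = g_j^{-1}hg_j$ since $a^{-1}ha = h$. Thus $\mu(Hg_j)$ depends only on the coset $Hg_j$, and $\triangleleft$ is well-defined.

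Next I would verify the axioms. Idempotency \textbf{(Q1)} follows since $Hg_i \triangleleft Hg_i = Hg_ig_i^{-1}hg_i = Hhg_i = Hg_i$, using $h \in Z(H) \subseteq H$ so that $Hh = H$. Right-invertibility \textbf{(Q2)} is clear because $x \mapsto x \cdot \mu(Hg_j)$ is right translation by a fixed group element, hence a bijection of $H\backslash G$, with inverse given by right translation by $\mu(Hg_j)^{-1} = g_j^{-1}h^{-1}g_j$. The substantive computation is self-distributivity \textbf{(Q3)}. The key identity I would establish first is that $\mu$ transforms by conjugation under translation, namely $\mu\!\left(Hg_j\,\mu(Hg_k)\right) = \mu(Hg_k)^{-1}\mu(Hg_j)\mu(Hg_k)$, obtained by taking $g_j\mu(Hg_k)$ as a representative of the coset $Hg_j\,\mu(Hg_k)$ and expanding. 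Granting this, both sides of \textbf{(Q3)} collapse to $Hg_i\,\mu(Hg_j)\,\mu(Hg_k)$: the left side directly, and the right side after the telescoping cancellation $\mu(Hg_k)\mu(Hg_k)^{-1}\mu(Hg_j)\mu(Hg_k) = \mu(Hg_j)\mu(Hg_k)$.

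The quandle has $n$ elements because $|H\backslash G| = |G|/|H| = n$. For algebraic connectedness I would observe that each symmetry $S_{Hg_j}$ is precisely right translation by $\mu(Hg_j) = g_j^{-1}hg_j$, so $\Inn(Q)$ is the image in $\mathrm{Sym}(H\backslash G)$ of the subgroup of $G$ generated by $\{g_1^{-1}hg_1, \dots, g_n^{-1}hg_n\}$ acting by right translation. By hypothesis these elements generate all of $G$, and $G$ acts transitively on its own coset space $H\backslash G$; hence $\Inn(Q)$ acts with a single orbit, which is the definition of algebraic connectedness.

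The main obstacle I anticipate is the bookkeeping in \textbf{(Q3)}: one must correctly select a representative of the translated coset $Hg_j\,\mu(Hg_k)$ before applying $\mu$, and it is exactly here that the well-definedness of $\mu$ — and hence the centrality of $h$ — is silently invoked. Everything else reduces to short direct computations, with the centrality hypothesis otherwise entering only through well-definedness and \textbf{(Q1)}, and the generation hypothesis entering only through connectedness.
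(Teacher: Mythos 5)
Your proof is correct. The paper states this theorem as a citation to Ehrman et al.\ without reproving it, but its own Section 5 proofs of the profinite analog take exactly your route --- a direct verification of \textbf{(Q1)}--\textbf{(Q3)} for the right coset quandle $(H\backslash G, \triangleleft)$ via the same telescoping computation $Hgm^{-1}hm\,m^{-1}h^{-1}m\,k^{-1}hk\,m^{-1}hm = Hgk^{-1}hk\,m^{-1}hm$ that your conjugation identity $\mu(Hg_j\,\mu(Hg_k)) = \mu(Hg_k)^{-1}\mu(Hg_j)\mu(Hg_k)$ packages, followed by the observation that connectedness holds because the inner action is just the transitive right-multiplication action of $G$ on $H\backslash G$; your explicit well-definedness check (where $h \in Z(H)$ enters) makes precise a point the paper's version leaves implicit.
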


Later, we will obtain analogous results for profinite quandles.

As quandles are models of an equational theory, the category of quandles admits all limits and colimits.  In particular, products:

\begin{definition}[Product]
Let $(Q, \tl_q)$ and $(S, \tl_S)$ be quandles. The product quandle $(Q \times S, \tr)$ is the product of the underlying sets with the operation  
\[(q_1, s_1) \tr (q_2, s_2) := (q_1 \tr_Q q_2, s_1 \tr_S s_2).\]

Let $(Q_i, \tl_i)$ for $i\in {\mathcal I}$ be a set of quandles, the product quandle $\prod_{i\in {\mathcal I}} Q_i$ is the product of the underlying sets with 
\[ \langle q_i \rangle_{i\in {\mathcal I}} \tl
\langle r_i \rangle_{i\in {\mathcal I}} := \langle q_i \tl_i r_i\rangle_{i\in {\mathcal I}}
\]
\end{definition}

The coproduct of quandles in complicated in the same way that the coproduct of groups (i.e. free product of groups) is, and as it is ill-behaved in the profinite setting, we will not consider it.

We will, however, consider a special case of the semi-disjoint union construction, to wit:

\begin{definition}[Disjoint Union]
Let $(Q, \tl_q)$ and $(S, \tl_S)$ be quandles. The disjoint union quandle $(Q\sqcup S, \tr)$ is defined as follows:
\begin{enumerate}[label=(\roman*)]
    \item If $q\in Q$ and $s\in S$, then $q\tl s=q$ and $s\tl q=s$.
    \item If $q_1,q_2\in Q$, then $q_1\tl q_2=q_1\tl_Qq_2$.
    \item If $s_1,s_2\in S$, then $s_1\tl s_2=s_1\tl_Ss_2$.
\end{enumerate}
\end{definition}

It should be noted that disjoint union is {\em not} the coproduct in the category of quandles. It can be described in terms of a univeral property, being the quandle equipped with quandle homomorphisms from $Q$ and $S$ and universal among all such quandles in which the images of the two quandles act trivially on each other.


Finally, we will have cause to consider a notion from Joyce \cite{JOYCE198237} which both gives a means of constructing quandles and a relationship between quandles and certain groups related to them.

\begin{definition}[Augmented Quandle]
An \textit{augmented quandle} $(Q,G)$ consists of a set $Q$, a group $G$ equipped with a right action on $Q$, and a map $|\cdot| : Q \to G$ satisfying
    \begin{enumerate}[label=\textbf{(AQ\arabic*)}]
        \item $q |q| = q$ for all $q \in Q$

        \item $|qg| = g^{-1}|q|g$ for all $q \in Q, g \in G$.
    \end{enumerate}
The map $|\cdot|$ is called the \textit{augmentation map}.
\end{definition}

Given an augmented quandle $(Q,G)$, we can define a quandle operation on $Q$ by $x \triangleleft y := x|y|$. If $Q$ was already endowed with a quandle structure, then the quandle structure prescribed by $|\cdot|$ is the same structure, hence why we can define an augmented quandle using just a set, as opposed to a quandle.

In \cite{JOYCE198237}, Joyce showed that, given a quandle $(Q,\triangleleft)$, the inclusion of generators into $\Adconj(Q)$ can be considered the universal augmentation of $Q$ in the sense that $\triangleleft$ induces a right $\Adconj(Q)$-action on $Q$ by quandle automorphisms such that the inclusion of generators is an augmentation. Furthermore, given a group $G$ and augmentation $\la\cdot\ra: Q \to G$, there is a unique group homomorphism $\varphi : \Adconj(Q) \to G$ such that $\varphi(|q|) = \la q\ra$ and $qg = q\varphi(g)$ for all $q \in Q, g \in \Adconj(Q)$. This is why $\Adconj(Q)$ is occasionally referred to as the universal augmentation group of $Q$.







\section{Profinite Quandles}

A profinite quandle is the inverse limit of a family of finite quandles organized into an inverse system. Hence profinite quandles are, in a sense, ``approximated'' by finite quandles. It is of interest to determine if properties that hold true for finite quandles hold true for their profinite counterparts.

\subsection{Inverse Systems and Profinite Quandles}

\begin{definition}
    A directed set $(\Lambda, \leq)$ is a pair consisting of a set $\Lambda$ and a partial order $\leq$ for which every pair of elements has an upper bound.
\end{definition}

\begin{definition}[Inverse system of quandles]
Let $(\Lambda,\leq)$ be a directed set. An inverse system of quandles and quandle homomorphisms over $\Lambda$ is a pair $(\{Q_\lambda\}_{\lambda\in\Lambda},\{\phi_{\a\b}\}_{\a\leq\b\in\Lambda})$ consisting of a family of quandles $\{Q_\lambda\}_{\lambda\in\Lambda}$ indexed by $\Lambda$ and a family of quandle homomorphisms $\varphi_{\a\b}:Q_\b\to Q_\a$ for all $\a\leq\b$ with $\a,\b\in\Lambda$ such that the following hold:
\begin{enumerate}[label=(\roman*)]
    \item The homomorphism $\varphi_{\a\a}$ is the identity on $Q_\a$ for all $\a\in\Lambda$.
    \item We have $\varphi_{\a\d}=\phi_{\a\b}\circ\phi_{\b\d}$ for all $\a,\b,\d\in\Lambda$ with $\a\leq\b\leq\d$.
\end{enumerate}
We call the maps $\varphi_{\a\b}$ the transition maps of the inverse system.
\end{definition}


\begin{definition}[Inverse limit]
Let $(\Lambda,\leq)$ be a directed poset and let $(\{Q_\lambda\}_{\lambda\in\Lambda},\{\phi_{\a\b}\}_{\a\leq\b\in\Lambda})$ be an inverse system of quandles and quandle homomorphisms over $\Lambda$. The inverse limit of this inverse system is given by
\[Q=\varprojlim_{\lambda\in\Lambda}Q_\lambda:=\l\{\vec{q}\in\prod_{\lambda\in\Lambda}Q_\lambda:q_\a=\varphi_{\a\b}(q_\b)\text{ for all }\a\leq\b\text{ in }\Lambda\r\}.\]
There are projection maps $\pi_\alpha:\varprojlim_{\lambda\in\Lambda}Q_\lambda\to Q_\a$ from the inverse limit to each of the factors $Q_\alpha$ for all $\a\in\Lambda$ such that
\[\begin{tikzcd}
	& {\displaystyle{\varprojlim_{\lambda \in \Lambda}} \hspace{1mm}Q_\lambda} \\
	{Q_\alpha} && {Q_\beta}
	\arrow["{\varphi_{\alpha\beta}}", from=2-3, to=2-1]
	\arrow["{\pi_\alpha}"', from=1-2, to=2-1]
	\arrow["{\pi_{\beta}}", from=1-2, to=2-3]
\end{tikzcd}\]
commutes for all $\alpha \leq \beta \in \Lambda$. The projection $\pi_\a$ is given by $\pi_\a(q)=q_\a$ for all $q=(q_\lambda)_{\lambda\in\Lambda}\in \varprojlim_{\lambda\in\Lambda}Q_\lambda$. Note that inverse limit coincides with the categorical notion of limit, or projective limit.
\end{definition}

\begin{proposition}
Let $(\Lambda,\leq)$ be a directed poset and let $(\{Q_\lambda\}_{\lambda\in\Lambda},\{\phi_{\a\b}\}_{\a\leq\b\in\Lambda})$ be an inverse system of quandles and quandle homomorphisms over $\Lambda$. The inverse limit $Q:=\varprojlim_{\lambda\in\Lambda}Q_\lambda$ is a quandle with operation given by coordinatewise operation. That is,
\[q\tl s=(q_\lambda)_{\lambda\in\Lambda}\tl(s_\lambda)_{\lambda\in\Lambda}:=(q_\lambda\tl_{\,Q_\lambda}s_\lambda)_{\lambda\in\Lambda}\]
for all $q,s\in Q$, where $\tl_{\,Q_\lambda}$ is the quandle operation on $Q_\lambda$ for all $\lambda\in\Lambda$.
\end{proposition}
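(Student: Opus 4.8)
The plan is to reduce the statement to showing that $Q$ is a subquandle of the product quandle $\prod_{\lambda\in\Lambda}Q_\lambda$. Since quandles are the models of an equational theory, the product carries the coordinatewise quandle structure given in the Product definition, and the proposed operation on $Q$ is precisely the restriction of this product operation to the subset $Q\subseteq\prod_{\lambda}Q_\lambda$. Thus it suffices to prove two things: first, that $Q$ is closed under the coordinatewise operation $\tl$ (and under $\tl^{-1}$), so that the operation is well defined on $Q$; and second, that the quandle axioms hold. The second point is then immediate, since a subset of a quandle closed under both operations is automatically a quandle with the inherited structure.

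The only substantive step is closure, and it is exactly where the hypothesis that the transition maps are quandle homomorphisms is used. Given compatible tuples $q=(q_\lambda)_{\lambda\in\Lambda}$ and $s=(s_\lambda)_{\lambda\in\Lambda}$ in $Q$, meaning $q_\a=\varphi_{\a\b}(q_\b)$ and $s_\a=\varphi_{\a\b}(s_\b)$ for all $\a\leq\b$, I would check that $q\tl s$ again satisfies the compatibility condition. For $\a\leq\b$ in $\Lambda$,
\[
\varphi_{\a\b}(q_\b\tl_{\,Q_\b}s_\b)=\varphi_{\a\b}(q_\b)\tl_{\,Q_\a}\varphi_{\a\b}(s_\b)=q_\a\tl_{\,Q_\a}s_\a,
\]
where the first equality is the homomorphism property of $\varphi_{\a\b}$ and the second is the compatibility of $q$ and $s$. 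Hence $q\tl s\in Q$. The same computation applies to $\tl^{-1}$, using that a quandle homomorphism automatically respects $\tl^{-1}$: since the inverse in axiom (\textbf{Q2}) is unique, one has $\varphi_{\a\b}(x\tl^{-1}_{\,Q_\b}y)=\varphi_{\a\b}(x)\tl^{-1}_{\,Q_\a}\varphi_{\a\b}(y)$.

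Finally I would verify (\textbf{Q1})--(\textbf{Q3}), though each reduces instantly to the corresponding axiom in the factors. For instance (\textbf{Q1}) reads $q\tl q=(q_\lambda\tl_{\,Q_\lambda}q_\lambda)_{\lambda\in\Lambda}=(q_\lambda)_{\lambda\in\Lambda}=q$, and (\textbf{Q2}), (\textbf{Q3}) follow the same coordinatewise pattern since each $Q_\lambda$ is a quandle. I expect the closure computation above to be the only real content; once it is in hand, everything else is formal and identical to the verification that an inverse limit of groups sits inside the product as a subgroup.
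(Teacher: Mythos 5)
Your proposal is correct and takes essentially the same approach as the paper's proof: closure of $Q$ under the coordinatewise operation via the homomorphism property of the transition maps $\varphi_{\alpha\beta}$ (the identical computation appears in the paper), followed by coordinatewise verification of (\textbf{Q1})--(\textbf{Q3}) inherited from the factors. Your added observation that quandle homomorphisms automatically respect $\triangleleft^{-1}$ by uniqueness in (\textbf{Q2}), which handles closure under the inverse operation, is a small point the paper leaves implicit but does not change the argument.
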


\begin{proof}
First note that $\tl$ is closed. Indeed, if $q,s\in Q$, then $q\tl s=(q_\lambda\tl_{\,Q_\lambda}s_\lambda)_{\lambda\in\Lambda}$ and since each $\phi_{\a\b}$ is a quandle homomorphism we have that
\[\phi_{\a\b}(q_\b\tl_{\,Q_\b}s_\b)=\phi_{\a\b}(q_\b)\tl_{\,Q_\a}\phi_{\a\b}(s_\b)=q_\a\tl_{\,Q_\a}s_\a.\]

The remaining quandle axioms follow from those of the factors $Q_\lambda$.

(\textbf{Q1}) Let $q\in Q$. Observe that
\begin{align*}
q \tl q
&=(q_\lambda)_{\lambda\in\Lambda}\tl (q_\lambda)_{\lambda\in\Lambda}\\
&=(q_\lambda\tl_{\,Q_\lambda}q_\lambda)_{\lambda\in\Lambda}\\
&=(q_\lambda)_{\lambda\in\Lambda},\text{ by (\textbf{Q1}) for each $Q_\lambda$}\\
&=q.
\end{align*}

(\textbf{Q2}) Let $q,s\in Q$. Observe that
\begin{align*}
(q\tl s)\tl^{-1}s
&=((q_\lambda\tl_{\,Q_\lambda}s_\lambda)\tl_{\,Q_\lambda}^{-1}s_\lambda)_{\lambda\in\Lambda}\\
&=(q_\lambda)_{\lambda\in\Lambda},\text{ by (\textbf{Q2}) for each $Q_\lambda$}\\
&=q.
\end{align*}
Moreover, observe that
\begin{align*}
(q\tl ^{-1}s)\tl s
&=((q_\lambda\tl_{\,Q_\lambda}^{-1}s_\lambda)\tl_{\,Q_\lambda}s_\lambda)_{\lambda\in\Lambda}\\
&=(q_\lambda)_{\lambda\in\Lambda},\text{ by (\textbf{Q2}) for each $Q_\lambda$}\\
&=q.
\end{align*}

(\textbf{Q3}) Let $q,r,s\in Q$. Observe that
\begin{align*}
(q\tl r)\tl s
&=(q_\lambda\tl_{\,Q_\lambda}r_\lambda)\tl_{\,Q_\lambda}s_\lambda)_{\lambda\in\Lambda}\\
&=((q_\lambda\tl_{\,Q_\lambda}s_\lambda)\tl_{\,Q_\lambda}(r_\lambda\tl_{\,Q_\lambda}s_\lambda)\\
&=(q\tl s)\tl (r\tl s),
\end{align*}
where we have used (\textbf{Q3}) for each $Q_\lambda$.
\end{proof}

\begin{definition}[Profinite quandle]
A quandle $Q$ is profinite if it is the inverse limit of an inverse system of finite quandles and quandle homomorphisms.
\end{definition}

\begin{example}
All finite quandles are the inverse limit of the inverse system consisting of only the finite quandle itself and its identity map.
\end{example}

\begin{example}
    A quandle $Q$ is said to be {\em residually finite} if for every pair of elements $q \neq r \in Q$, there exists a finite quandle $Q^\prime$ and a quandle homomorphism $f:Q\rightarrow Q^\prime$ such that $f(q) \neq f(r)$.

    If $Q$ is a residually finite quandle, it maps by an injective quandle homomorphism into a profinite quandle $\widehat{Q}$, its profinite completion, which is the limit of the inverse system of all finite quotient quandles of $Q$.

    Residual finiteness turns out to be a property shared by many of the most studied quandles:  Bardakov, Singh and Singh \cite{Bardakov_2019} have shown that knot quandles and free quandles are residually finite.
\end{example}

\begin{remark}
    An inverse system of finite quandles can without loss of generatlity be assumed to have surjective maps. This is shown in ~\cite{Wilkes} in the case of an inverse system of finite sets, and the proof applies to an inverse system of quandles also.  We desire the quandle homomorphisms to be surjective since the inner automorphism functor, $\Inn$, is only a functor on the category of quandles and surjective quandle homomorphisms, not the category of quandles and quandle homomorphisms.  Hereinafter we assume without further comment that the homomorphisms in any inverse system of quandles are surjective.
\end{remark}

The notions of inverse system, inverse limit, and profiniteness are defined for groups similarly.

\begin{definition}[Inverse system of groups]
An inverse system of groups and group homomorphisms over a directed poset $(\Lambda,\leq)$ is a pair $(\{G_\lambda\}_{\lambda\in\Lambda},\{\phi_{\a\b}\}_{\a\leq\b\in\Lambda})$ consisting of a family of groups $\{G_\lambda\}_{\lambda\in\Lambda}$ indexed by $\Lambda$ and a family of group homomorphisms $\phi_{\a\b}:G_\b\to G_\a$ for all $\a\leq\b$ with $\a,\b\in\Lambda$ such that the following hold:
\begin{enumerate}[label=(\roman*)]
    \item The homomorphism $\varphi_{\a\a}$ is the identity on $G_\a$ for all $\a\in\Lambda$.
    \item We have $\varphi_{\a\d}=\phi_{\a\b}\circ\phi_{\b\d}$ for all $\a,\b,\d\in\Lambda$ with $\a\leq\b\leq\d$.
\end{enumerate}
The inverse limit of such an inverse system is given by
\[G=\varprojlim_{\lambda\in\Lambda}G_\lambda:=\l\{\vec{g}\in\prod_{\lambda\in\Lambda}G_\lambda:g_\a=\phi_{\a\b}(g_\b)\text{ for all }\a\leq\b\text{ in }\Lambda\r\}.\]
A group $G$ is profinite if it is the inverse limit of an inverse system of finite groups and group homomorphisms.
\end{definition}

Profinite groups have been studied extensively and have a well-developed theory.~\cite{wilson1998profinite} Because of the intimate relationships between groups and quandles, profinite groups will occur throughout our subsequent discussion of profinite quandle.  In the first instance, they provide us with more examples:

\begin{example}
The functor $\Conj: \textbf{Gp} \to \textbf{Qndl}$ admits a left adjoint $\Adconj: \textbf{Qndl} \to \textbf{Gp}$. Hence $\Conj$ preserves all limits. Therefore $\Conj(G)$ is a profinite quandle whenever $G$ is a profinite group.
\end{example}

\begin{example}
The functor $\Tak: \textbf{Ab} \to \textbf{Kei}$ admits a left adjoint $\text{Adtak}: \textbf{Kei} \to \textbf{Ab}$. Hence $\Tak$ preserves all limits. Therefore $\Tak(A)$ is a profinite kei whenever $A$ is a profinite abelian group.
\end{example}

To fill in these examples of profinite quandles we need examples of profinite groups:

\begin{example} \label{p-adics}
    For any prime $p$, the additive group of the $p$-adic integers, ${\mathbb Z}_p$, is a profinite abelian group.

    This is easy to see as it is the inverse limit of the totally ordered diagram \[ {\mathbb Z}/p \leftarrow {\mathbb Z}/p^2 \leftarrow {\mathbb Z}/p^3 \leftarrow \ldots \].
\end{example}

\begin{example}
    The profinite completion of the integers $\widehat{\mathbb Z}$ is a profinite abelian group.
\end{example}

\begin{example}
    The absolute galois group $G_K$ of any field $K$, that is the galois group of $K_{sep}$, the separable closure of $K$ is a profinite (usually non-abelian) group.  (As an aside, we recall that the separable closure of a perfect field is the same as its algebraic closure, and that imperfect fields must be of finite characteristic and positive transcendence degree over the prime field of the same characteristic.)
\end{example}

\begin{example}
    The automorphism group of any (infinite) rooted tree all of whose vertices are of finite degree is a profinite group.
\end{example}

\subsection{Topological Properties}

Profinite quandles, while the definition as given does not {\em a priori} involve a topology, are, in fact, examples of topological quandles.

\begin{definition}[Topological Quandle]
A \textit{topological quandle} $(Q,\tau,\triangleleft)$ is a topological space $(Q,\tau)$ that is also a quandle $(Q, \triangleleft)$ such that the quandle operations $\triangleleft: Q \times Q \to Q$ and $\triangleleft^{-1}: Q \times Q \to Q$ are continuous.
\end{definition}

Every profinite quandle $Q=\varprojlim_{\lambda\in\Lambda}Q_\lambda$ can be endowed with a Stone topology (compact, Hausdorff, totally disconnected) as a subspace of $\prod_{\lambda\in\Lambda}Q_\lambda$ equipped with the product topology, where the finite quandles are made into Stone spaces by giving them each the discrete topology. But any product of Hausdorff totally disconnected spaces is again Hausdorff totally disconnected, and by Tychonoff's Theorem, the arbitrary product of compact spaces is again compact. Hence $\prod_{\lambda\in\Lambda}Q_\lambda$ is a Stone space. 

Subspaces of Hausdorff totally disconnected spaces are again Hausdorff totally disconnected, and by~\cite[Lemma 1.1.2]{wilson1998profinite} we have $\varprojlim Q_\lambda \subseteq \prod Q_\lambda$ is closed, hence compact. Therefore $Q$ is a Stone space.

The same argument established the well-known result that every profinite group $G=\varprojlim_{\lambda\in\Lambda}G_\lambda$ can be endowed with a Stone topology as a subspace of the product topology on $\prod_{\lambda\in\Lambda}G_\lambda$. It is also known that, conversely, every Stone topological group is a profinite group \cite{johnstone1982stone}. 

The corresponding converse result does not hold for quandles, as the following example due to Ariel Davis \cite{Dav24} shows:

\begin{example}
    Consider the space $\mathcal{Z} := \mathbb{Z}_* \coprod \{s\}$, where $\mathbb{Z}_*$ is the one-point compactification of the discrete space of integers.
    Plainly $\mathcal{Z}$ is a Stone space.  Now endow it with a continuous quandle operation by defining $x\triangleleft y = x$ unless $x\in \mathbb{Z} \subset \mathcal{Z}$ and $y=s$, in which case $x\triangleleft s = x+1$.

    It is easy to see that the operation is continuous and satisfies the quandle axioms, so that $\mathcal Z$ is a Stone topological quandle.

    However, $\mathcal{Z}$ is not profinite.  If it were, it would have to be an inverse limit of finite quotient quandles each with the discrete topology.  It is easy to classify the finite quotients of the underlying (non-topological) quandle of $\mathcal{Z}$:  they are the one and two element trivial quandles (in the latter case the quotient map maps $s$ to one element, and all of $\mathbb{Z}_*$ to the other) and quandles with underlying sets $\mathbb{Z}/n\mathbb{Z} \coprod \{\infty\} \coprod \{s\}$, with the operation given by first projection unless $[x]\in \mathbb{Z}/n\mathbb{Z}$, in which case $[x]\triangleleft s = [x+1]$.  Here $[x]$ denotes the residue of $x$ modulo $n$.  
    
    But the projection maps from a profinite quandle with its induced Stone topology to the discrete finite quandles in its defining diagram are continuous.  The quotient maps from $\mathcal{Z}$ to its finite quotients of  the form $\mathbb{Z}/n\mathbb{Z} \coprod \{\infty\} \coprod \{s\}$ are not continuous, since the inverse image of the singleton $\{\infty\}$ is $\{\infty\}$. The set $\{\infty\}$ is not open since the open neighborhoods of $\infty$ in $\mathcal{Z}$ are all a union of a cofinite subset of $\mathbb{Z}$ with either $\{\infty\}$ or $\{\infty, s\}$.  However, such a projection map would need to be among those to the defining diagram were $\mathcal{Z}$ to be profinite, as the projections to the defining diagram collectively separate element, and the maps to the trivial quotients do not.

    Thus $\mathcal{Z}$ is a non-profinite Stone topological quandle.
\end{example}

Throughout this paper, the existence of the Stone topology on profinite quandles will be important in a number of proofs. 

\begin{proposition}
Let $Q$ be a profinite quandle with inverse system $(\{Q_\lambda\}_{\lambda \in \Lambda}, \{\varphi_{\alpha\beta}\}_{\alpha \leq \beta \in \Lambda})$ and projection maps $\pi_\lambda : Q \to Q_\lambda.$ The collection of subsets

\[\mc{B} := \{\pi_\lambda^{-1}(x) : x \in Q_\lambda, \lambda \in \Lambda\}\]

forms a basis for a Stone topology on $Q.$ We call $\mc{B}$ the ``slim basis'' of $Q$ and its elements ``slim basic open sets''.
\end{proposition}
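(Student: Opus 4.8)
The plan is to establish the claim in two movements: first verify that $\mc{B}$ satisfies the two axioms for a basis of a topology, and then identify the topology it generates with the subspace topology that $Q$ inherits from $\prod_{\lambda\in\Lambda}Q_\lambda$ (with each finite factor discrete), whose Stone character was already recorded in the discussion preceding this proposition. In this way the verification reduces to a purely set-theoretic comparison of topologies, and the topological conclusion is imported wholesale from the earlier argument.

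For the basis axioms, the covering condition is immediate: fixing any $\alpha\in\Lambda$, every $q\in Q$ lies in $\pi_\alpha^{-1}(q_\alpha)\in\mc{B}$, so $\mc{B}$ covers $Q$. The substantive point is the intersection condition. Given two slim basic open sets $\pi_\alpha^{-1}(x)$ and $\pi_\beta^{-1}(y)$ and a point $q$ in their intersection, I would use directedness of $\Lambda$ to choose $\gamma\geq\alpha,\beta$ and consider the slim basic open set $\pi_\gamma^{-1}(q_\gamma)$. The transition identities $\pi_\alpha=\varphi_{\alpha\gamma}\circ\pi_\gamma$ and $\pi_\beta=\varphi_{\beta\gamma}\circ\pi_\gamma$ force every $q'$ with $q'_\gamma=q_\gamma$ to satisfy $q'_\alpha=\varphi_{\alpha\gamma}(q_\gamma)=q_\alpha=x$ and, similarly, $q'_\beta=y$; hence $q\in\pi_\gamma^{-1}(q_\gamma)\subseteq\pi_\alpha^{-1}(x)\cap\pi_\beta^{-1}(y)$, which is exactly the basis condition. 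This is where the essential work lies: the directed-system structure is precisely what lets a finite intersection of slim basic sets be reexpressed as a union of slim basic sets.

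To conclude that the generated topology is Stone, I would identify it with the product subspace topology. Each $\pi_\lambda^{-1}(x)$ is the restriction to $Q$ of the product-topology open set $\mathrm{pr}_\lambda^{-1}(\{x\})$ (open because $Q_\lambda$ carries the discrete topology), so the topology generated by $\mc{B}$ is coarser than the subspace topology. Conversely, the subspace topology is generated by finite intersections of such restricted sets, and the computation above shows each such intersection is a union of members of $\mc{B}$; hence the two topologies coincide. Since the subspace topology on $Q\subseteq\prod_\lambda Q_\lambda$ was already argued to be compact, Hausdorff, and totally disconnected, $\mc{B}$ is a basis for a Stone topology.

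The main obstacle is the intersection axiom, and its only real subtlety is the correct joint use of directedness and the transition-map identities $\pi_\alpha=\varphi_{\alpha\gamma}\circ\pi_\gamma$; once that is in hand, the identification with the product subspace topology and the appeal to the earlier Stone-space argument are routine.
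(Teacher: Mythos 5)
Your proof is correct and takes essentially the same approach as the paper: the covering axiom is immediate, and the intersection axiom is handled exactly as in the paper's proof, by using directedness of $\Lambda$ to choose $\gamma \geq \alpha,\beta$ and the transition identities $\pi_\alpha = \varphi_{\alpha\gamma}\circ\pi_\gamma$, $\pi_\beta = \varphi_{\beta\gamma}\circ\pi_\gamma$ to conclude $q \in \pi_\gamma^{-1}(q_\gamma) \subseteq \pi_\alpha^{-1}(x)\cap\pi_\beta^{-1}(y)$. Your closing paragraph, identifying the topology generated by $\mc{B}$ with the subspace topology inherited from $\prod_{\lambda\in\Lambda}Q_\lambda$ and thereby importing the Stone property, makes explicit a step the paper leaves implicit and only strengthens the argument.
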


\begin{proof}
We will directly verify that $\mc{B}$ is a basis.
\begin{enumerate}
    \item \textbf{Covering Property:} Let $q=(q_\lambda)_{\lambda\in\Lambda}\in Q$. We have that $q \in \pi_\lambda^{-1}(q_\lambda)$ for each $\lambda \in \Lambda$.
    \item \textbf{Covering of Finite Intersections:} 
    Consider $\pi_\a^{-1}(x)$ and $\pi_\b^{-1}(y)$ for some elements $x\in Q_\a$ and $y\in Q_\b$. If the intersection is empty, there is nothing to show, so suppose $q = (q_\lambda)_{\lambda \in \Lambda} \in \pi_\a^{-1}(x) \cap \pi_\b^{-1}(y)$.  So $\pi_\a(q) = q_\a = x$ and $\pi_\b(q) = q_\b = y$.
    But $\Lambda$ is directed, so there exits $\gamma$ with $\a \leq \gamma$ and $\b \leq \gamma$.  

    Now, by the construction of the inverse limit, $\phi_{\a \gamma}(q_\gamma) = q_\a = x$ and $\phi_{\b \gamma}(q_\gamma) = q_\b = y$.  In fact, for the same reason $\pi_\gamma^{-1}(q_\gamma) \subset \pi_\a^{-1}(x) \cap \pi_\b^{-1}(y)$.  But $q = (q_\lambda)_{\lambda \in \Lambda} \in \pi_\gamma^{-1}(q_\gamma)$.  So, as required, given any element of an intersection of basic opens, there is a basic open neighborhood contained in the intersection.
\end{enumerate}

Thus $\mc{B} := \{\pi_\lambda^{-1}(x) : x \in Q_\lambda, \lambda \in \Lambda\}$ is a basis for the topology on $Q$.  
\end{proof}

The same argument works for profinite groups.  It will generally be easiest to verify that maps to a profinite quandle are continuous by verifying that the inverse images of slim basic open sets are open.

\subsection{Products and Disjoint Unions}

\begin{proposition} \label{binaryproducts}
If $Q$ and $S$ are profinite quandles, then the product $Q \times S$ is a profinite quandle.\footnote{This result was observed without proof by Singh \cite{S23}.}
\end{proposition}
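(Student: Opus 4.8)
The plan is to exhibit $Q \times S$ explicitly as the inverse limit of an inverse system of finite quandles. Write $Q = \varprojlim_{\lambda \in \Lambda} Q_\lambda$ and $S = \varprojlim_{\mu \in M} S_\mu$, where $\Lambda$ and $M$ are directed posets and every $Q_\lambda$ and $S_\mu$ is finite, with transition maps $\varphi_{\lambda\lambda'} : Q_{\lambda'} \to Q_\lambda$ and $\psi_{\mu\mu'} : S_{\mu'} \to S_\mu$.

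First I would form the index poset $\Lambda \times M$ under the componentwise order and check that it is directed: given $(\lambda,\mu)$ and $(\lambda',\mu')$, directedness of $\Lambda$ and of $M$ separately produces $\lambda'' \geq \lambda, \lambda'$ and $\mu'' \geq \mu, \mu'$, and then $(\lambda'',\mu'')$ is an upper bound of both pairs. Over this poset I set $P_{(\lambda,\mu)} := Q_\lambda \times S_\mu$, which is a finite quandle since the product of two finite quandles is again a finite quandle, with transition maps $\varphi_{\lambda\lambda'} \times \psi_{\mu\mu'}$ for $(\lambda,\mu) \leq (\lambda',\mu')$. The inverse-system axioms (identity and composition) for this family follow immediately from those for $\{\varphi_{\lambda\lambda'}\}$ and $\{\psi_{\mu\mu'}\}$, verified componentwise.

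The heart of the argument is to produce a quandle isomorphism $\varprojlim_{(\lambda,\mu)} P_{(\lambda,\mu)} \cong Q \times S$. An element of the left-hand side is a compatible family $(x_{(\lambda,\mu)}, y_{(\lambda,\mu)})_{(\lambda,\mu)}$. The key observation is that each coordinate depends on only one index: for fixed $\lambda$, comparing $(\lambda,\mu)$ and $(\lambda,\mu')$ through a common upper bound $(\lambda,\mu'')$ and using that $\varphi_{\lambda\lambda}$ is the identity forces $x_{(\lambda,\mu)} = x_{(\lambda,\mu')}$, so $x_{(\lambda,\mu)}$ depends only on $\lambda$; symmetrically $y_{(\lambda,\mu)}$ depends only on $\mu$. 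Thus the family decouples into a compatible family $(x_\lambda)_{\lambda} \in Q$ together with a compatible family $(y_\mu)_\mu \in S$, and the resulting assignment is a bijection with inverse $\bigl((x_\lambda)_\lambda, (y_\mu)_\mu\bigr) \mapsto \bigl((x_\lambda, y_\mu)\bigr)_{(\lambda,\mu)}$. Because the quandle operation on each $P_{(\lambda,\mu)}$, on the limit, and on $Q \times S$ is in every case computed componentwise, this bijection respects $\triangleleft$, hence is a quandle isomorphism, and $Q \times S$ is profinite.

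Alternatively one could bypass element-chasing by invoking the general fact that limits commute with limits, so that the categorical product $Q \times S$ of the two inverse limits is canonically the limit of the product diagram; only the reindexing onto the directed poset $\Lambda \times M$ would then require attention. I expect the main, though modest, obstacle to be exactly this bookkeeping — confirming that $\Lambda \times M$ is directed and that compatibility over the product poset genuinely decouples into the two separate compatibility conditions — rather than anything involving the quandle axioms, which transfer componentwise without incident.
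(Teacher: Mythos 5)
Your proposal is correct. The construction is the same as the paper's up to the comparison step: both proofs form the directed product poset and the inverse system $P_{(\lambda,\mu)} = Q_\lambda \times S_\mu$ with transition maps $\varphi_{\lambda\lambda'} \times \psi_{\mu\mu'}$. Where you diverge is in proving that the limit of this system is $Q \times S$. The paper obtains a canonical map $\Psi : Q \times S \to \varprojlim (Q_\lambda \times S_\mu)$ from the universal property, checks injectivity by hand, and proves surjectivity \emph{topologically}: $\Psi(Q\times S)$ is compact, hence closed in the Hausdorff limit, and dense (every slim basic open set meets the image), so it is everything. You instead prove bijectivity purely algebraically, via the decoupling observation that for a compatible family $(x_{(\lambda,\mu)}, y_{(\lambda,\mu)})$, comparing $(\lambda,\mu)$ and $(\lambda,\mu')$ through a common upper bound $(\lambda,\mu'')$ and using that $\varphi_{\lambda\lambda} = \mathrm{id}$ forces $x_{(\lambda,\mu)}$ to depend only on $\lambda$ (and symmetrically for $y$), so the limit's elements are exactly pairs of compatible families. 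This is sound (it tacitly uses that $\Lambda$ and $M$ are nonempty, which is harmless), and it is in substance the argument the paper itself uses later for arbitrary products in Theorem \ref{arbitraryproducts}, where elements of the limit are shown to be determined by the coordinates with singleton index sets. What each route buys: yours is more elementary, requires no topology at all, and generalizes directly to infinite index sets; the paper's compactness--Hausdorff--density scheme gets continuity of $\Psi$ for free from the universal property and serves as a reusable template, deployed almost verbatim for disjoint unions (Proposition following \ref{binaryproducts}), for closed subquandles (Proposition \ref{profinitesubisclosed}), and for the coset quandle theorem. Your closing remark that one could instead invoke commutation of limits with limits is also valid and is the cleanest categorical formulation, with the reindexing over $\Lambda \times M$ being, as you say, the only bookkeeping required.
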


\begin{proof}
Let $(\{Q_\d\}_{\d\in\Delta},\{\phi_{\a\b}\}_{\a\leq\b\in\Delta})$ and $(\{S_\lambda\}_{\lambda\in\Lambda},\{\psi_{\a\b}\}_{\a\leq\b\in\Delta})$ be inverse systems for $Q$ and $S$ over directed posets $\Delta$ and $\Lambda$, respectively, and denote the projections $\pi^Q_\delta: Q \to Q_\delta$ and $\pi^S_\lambda \to S \to S_\lambda$. Consider the collection $\{Q_\delta \times S_\lambda\}_{(\delta,\lambda) \in \Delta \times \Lambda}$. Define a collection of quandle homomorphisms
\[\left\{\sigma_{(\alpha,\xi),(\beta,\eta)} : Q_\b \times S_\eta \to Q_\a \times S_\xi \mid (\alpha,\xi),(\beta,\eta) \in \Delta \times \Lambda, (\alpha,\xi)\leq (\beta,\eta)\right\}\]

where $\sigma_{(\alpha,\xi),(\beta,\eta)} := \varphi_{\a\b} \times \psi_{\xi\eta}$ and $\Delta \times \Lambda$ is endowed with the product order. It is clear that $(\{Q_\delta \times S_\lambda\}_{(\delta,\lambda) \in \Delta \times \Lambda}, \{\sigma_{(\alpha,\xi),(\beta,\eta)}\}_{(\alpha,\xi)\leq (\beta,\eta) \in \Delta \times \Lambda})$ forms an inverse system, where $\rho_{(\a,\xi)} : \varprojlim Q_\delta \times S_\lambda \to Q_\a \times S_\xi$ denotes the projection maps. There are maps from $Q \times S$ to each $Q_\delta \times S_\lambda$, namely $\pi_\delta^Q \times \pi_\lambda^S$, so by the universal property of limits, there is a unique continuous quandle homomorphism $\Psi : Q \times S \to \varprojlim Q_\delta \times S_\lambda$ such that the following diagram commutes:

\[\begin{tikzcd}
	& {Q \times S} \\
	& {\varprojlim Q_\delta \times S_\lambda} \\
	{Q_\a \times S_\xi} && {Q_\b \times S_\eta}
	\arrow["{\rho_{(\b,\eta)}}"{description}, from=2-2, to=3-3]
	\arrow["{\rho_{(\a,\xi)}}"{description}, from=2-2, to=3-1]
    \arrow["{\sigma_{((\a,\xi),(\b,\eta))}}"', from=3-3, to=3-1]	
    \arrow["{\pi^Q_\b \times \pi^S_\eta}", from=1-2, to=3-3]
	\arrow["{\pi^Q_\a \times \pi^S_\xi}"', from=1-2, to=3-1]
	\arrow["{\exists!\Psi}", dashed, from=1-2, to=2-2]
\end{tikzcd}\]

We claim $\Psi$ is an isomorphism. To see that $\Psi$ is injective, let $x,y \in Q \times S$ such that $\Psi(x) = \Psi(y).$ Then for all $\delta \in\Delta$ and $\lambda \in \Lambda,$
\[\rho_{(\delta,\lambda)}(\Psi(x)) = \rho_{(\delta,\lambda)}(\Psi(y)) \Rightarrow \pi^Q_\delta\times\pi^S_\lambda(x) = \pi^Q_\delta\times\pi^S_\lambda(y) \Rightarrow x = y.\]

For surjectivity the quandle $Q \times S$ is a product of a compact spaces, hence $Q \times S$ is compact. Furthermore $\Psi(Q \times S) \subseteq \varprojlim Q_i \times S_i$ is compact and therefore closed since $\varprojlim Q_\delta \times S_\lambda$ is Hausdorff. Thus it suffices to show that $\Psi(Q \times S)$ is dense.

Let $x \in \varprojlim Q_\delta \times S_\lambda$ and let $y \in Q_\delta \times S_\lambda$ such that $\rho_{(\delta,\lambda)}^{-1}(y)$ is a neighborhood of $x$. We have $\rho_{(\delta,\lambda)}(x) = y,$ so let $z \in (\pi^Q_\delta \times \pi^S_\lambda)^{-1}(y).$ Then $\rho_{(\delta,\lambda)}(\Psi(z)) = y,$ meaning $\Psi(z) \in \rho_{(\delta,\lambda)}^{-1}(y),$ or $\rho_{(\delta,\lambda)}^{-1}(y) \cap (\Psi(Q \times S) \setminus \{x\}) \neq \varnothing$, so $x$ is a limit point of $\Psi(Q \times S)$. Thus, $\Psi(Q \times S) \subseteq \varprojlim Q_\delta \times S_\lambda$ is dense and 
\[Q \times S \cong \varprojlim Q_\delta \times S_\lambda.\] 
\end{proof}

The same indexing by the products of the two directed sets underlying the inverse systems as was used to prove 
Proposition \ref{binaryproducts}, but using pairwise disjoint unions of finite quandles rather than pairwise products let us show the following.

\begin{proposition}
If $Q$ and $S$ are profinite quandles, then the disjoint union $Q\sqcup S$ is a profinite quandle.
\end{proposition}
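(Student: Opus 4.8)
The plan is to follow the proof of Proposition~\ref{binaryproducts} line for line, substituting pairwise disjoint unions for pairwise products. Write $Q = \varprojlim_{\delta \in \Delta} Q_\delta$ and $S = \varprojlim_{\lambda \in \Lambda} S_\lambda$, with transition maps $\varphi_{\a\b}$ and $\psi_{\xi\eta}$ and projections $\pi^Q_\delta$, $\pi^S_\lambda$. I would index a new system by $\Delta \times \Lambda$ with the product order, using the finite quandles $Q_\delta \sqcup S_\lambda$ and the transition maps $\sigma_{(\a,\xi),(\b,\eta)} := \varphi_{\a\b} \sqcup \psi_{\xi\eta}$. Since a disjoint union of two finite quandles is again a finite quandle, this produces a system of finite quandles once the maps are shown to be homomorphisms.

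The one genuinely new verification is that the disjoint union $f \sqcup g$ of quandle homomorphisms $f : Q_\b \to Q_\a$ and $g : S_\eta \to S_\xi$ is itself a homomorphism of disjoint-union quandles. On same-summand pairs this is immediate from $f$ and $g$ being homomorphisms; the content is in the cross pairs. For $q \in Q_\b$ and $s \in S_\eta$ we have $q \tl s = q$, while $(f\sqcup g)(q) \tl (f\sqcup g)(s) = f(q) \tl g(s) = f(q)$ because $f(q)$ and $g(s)$ lie in different summands and so act trivially on one another; this equals $(f \sqcup g)(q \tl s) = f(q)$, and symmetrically for $s \tl q$. Granting this, $(\{Q_\delta \sqcup S_\lambda\}, \{\sigma_{(\a,\xi),(\b,\eta)}\})$ is an inverse system, the maps $\pi^Q_\delta \sqcup \pi^S_\lambda : Q \sqcup S \to Q_\delta \sqcup S_\lambda$ are compatible with its transition maps, and the universal property of the limit yields a unique continuous quandle homomorphism $\Psi : Q \sqcup S \to \varprojlim (Q_\delta \sqcup S_\lambda)$.

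Next I would show $\Psi$ is a bijection exactly as in Proposition~\ref{binaryproducts}. For injectivity, note first that a point of $Q$ and a point of $S$ can never have the same image, since under every $\pi^Q_\delta \sqcup \pi^S_\lambda$ the former lands in the $Q_\delta$ summand and the latter in the $S_\lambda$ summand; within each of $Q$ and $S$ separately the respective projections already separate points, so $\Psi(x) = \Psi(y)$ forces $x = y$. For surjectivity, $Q \sqcup S$ is a finite disjoint union of compact spaces, hence compact, so $\Psi(Q \sqcup S)$ is closed in the Hausdorff space $\varprojlim(Q_\delta \sqcup S_\lambda)$; it then suffices to prove density, which goes through verbatim: given $x$ in the limit and a slim basic neighborhood $\rho_{(\delta,\lambda)}^{-1}(y)$ with $y = \rho_{(\delta,\lambda)}(x)$, surjectivity of the projections (our standing assumption) provides $z \in (\pi^Q_\delta \sqcup \pi^S_\lambda)^{-1}(y)$, and then $\Psi(z) \in \rho_{(\delta,\lambda)}^{-1}(y)$.

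I expect essentially no hard obstacle here: the argument is structurally identical to the product case, and the only new points are the cross-pair homomorphism check and the observation in injectivity that $Q$- and $S$-points remain in disjoint summands. The one conceptual sanity check worth recording is that the target limit carries no ``mixed'' points: directedness of $\Delta \times \Lambda$ forces every compatible tuple to have a single type (all coordinates in $Q$-summands, or all in $S$-summands) by comparing any two coordinates through a common upper bound, so $\varprojlim(Q_\delta \sqcup S_\lambda)$ genuinely decomposes as the disjoint union of $\varprojlim Q_\delta$ and $\varprojlim S_\lambda$, confirming that $\Psi$ is onto and that the limit is really $Q \sqcup S$.
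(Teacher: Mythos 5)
Your proof is correct, but it takes a genuinely different route from the paper's at the two decisive steps. The paper does not in fact run the product proof verbatim: it defines $\Psi$ by an explicit formula, checks the homomorphism property by hand in the same-summand and cross-summand cases, and --- most importantly --- proves surjectivity purely algebraically, by exactly the directedness argument you relegate to a closing ``sanity check'': any compatible tuple in $\varprojlim\, (Q_\delta \sqcup S_\lambda)$ must be pure (all coordinates in the $Q_\delta$'s, or all in the $S_\lambda$'s) and constant in the dummy index, hence visibly arises from a point of $Q$ or of $S$. You instead obtain $\Psi$ and its homomorphism property from the universal property of the limit, and prove surjectivity topologically, via compactness of $Q \sqcup S$, closedness of the image in the Hausdorff limit, and density checked on the slim basis --- precisely the template of Proposition~\ref{binaryproducts}. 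Both arguments are sound. Yours buys uniformity with the product case, supplies the cross-pair verification that $f \sqcup g$ is a homomorphism of disjoint-union quandles (a point the paper passes over quickly), and yields continuity for free, so that $\Psi$ is a continuous bijection between compact Hausdorff spaces and hence a homeomorphism; the paper's buys an explicit, topology-free description of the elements of the limit that makes the isomorphism transparent without invoking compactness at all. One small caution on your wording: surjectivity of the projections $\pi^Q_\delta$ and $\pi^S_\lambda$ from the limits is not itself the standing assumption --- that assumption concerns the transition maps of the inverse systems --- but it does follow from it by the standard compactness argument for surjective inverse systems of finite quandles, and the paper relies on the same fact implicitly in the proof of Proposition~\ref{binaryproducts}, so this is a matter of phrasing rather than a gap.
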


\begin{proof}
Let $Q=\varprojlim_{\d\in\Delta}Q_\d$ and $S=\varprojlim_{\lambda\in\Lambda}S_\lambda$ be profinite quandles with inverse systems $(\{Q_\d\}_{\d\in\Delta},\{\phi_{\a\b}\}_{\a\leq\b\in\Delta})$ and $(\{S_\lambda\}_{\lambda\in\Lambda},\{\psi_{\zeta\eta}\}_{\zeta\leq\eta\in\Lambda})$ over directed posets $\Delta$ and $\Lambda$, respectively. Consider the disjoint unions $Q_\d\sqcup S_\lambda$ of the factors of $Q$ and $S$. We can index the $Q_\d\sqcup S_\lambda$ by the directed poset $\Delta\times\Lambda$ with the product order. To construct an inverse system out of $\{Q_\d\sqcup S_\lambda\}_{(\d,\lambda)\in\Delta\times\Lambda}$, we define a collection of quandle homomorphisms
\[\{\sigma_{(\a,\zeta),(\b,\eta)}:Q_\b\sqcup S_\eta\to Q_\a\sqcup S_\zeta\}_{(\a,\zeta)\leq(\b,\eta)\in\Delta\times\Lambda},\]
where $\sigma_{(\a,\zeta),(\b,\eta)}:=\phi_{\a\b}\sqcup\psi_{\zeta\eta}$. That each $\sigma_{(\a,\zeta),(\b,\eta)}$ are quandle homomorphisms follows from the fact that each $\phi_{\a\b}$ and $\psi_{\zeta\eta}$ are quandle homomorphisms, along with the identity maps $\Id_{Q_\b},\Id_{S_\eta}$. To show that $(\{Q_\d\sqcup S_\lambda\}_{(\d,\lambda)\in\Delta\time\Lambda},\{\sigma_{(\a,\zeta),(\b,\eta)}\}_{(\a,\zeta)\leq(\b,\eta)\in\Delta\times\Lambda})$ is an inverse system it remains to observe that if $(\a,\zeta)\leq(\b,\eta)\leq(\gamma,\e)$, then
\[\sigma_{(\a,\zeta),(\gamma,\e)}=\sigma_{(\a,\zeta),(\b,\eta)}\circ\sigma_{(\b,\eta),(\gamma,\e)}.\]
To see this, let $v\in Q_\gamma\sqcup S_\e$. Without loss of generality suppose $v\in Q_\gamma$. Then
\begin{align*}
(\sigma_{(\a,\zeta),(\b,\eta)}\circ\sigma_{(\b,\eta),(\gamma,\e)})(v)
&=\sigma_{(\a,\zeta),(\b,\eta)}(\sigma_{(\b,\eta),(\gamma,\e)}(v))\\
&=\sigma_{(\a,\zeta),(\b,\eta)}((\phi_{\b\gamma}\sqcup\psi_{\eta\e})(v))\\
&=\sigma_{(\a,\zeta),(\b,\eta)}(\phi_{\b\gamma}(v))\\
&=\phi_{\a\b}(\phi_{\b\gamma}(v))\\
&=(\phi_{\a\b}\circ\phi_{\b\gamma})(v)\\
&=\phi_{\a\gamma}(v),\text{ since }\phi_{\a\gamma}=\phi_{\a\b}\circ\phi_{\b\gamma}\\
&=(\phi_{\a\gamma}\sqcup\psi_{\zeta\e})(v)\\
&=\sigma_{(\a,\zeta),(\gamma,\e)}(v).
\end{align*}
If $v\in S_\e$, then $\sigma_{(\a,\zeta),(\gamma,\e)}(v)=(\sigma_{(\a,\zeta),(\beta,\eta)}\circ\sigma_{(\b,\eta),(\gamma,\e)})(v)$ by the same argument on the $\psi_{\zeta\eta}$. Hence $(\{Q_\d\sqcup S_\lambda\}_{(\d,\lambda)\in\Delta\times\Lambda},\{\sigma_{(\a,\zeta),(\b,\eta)}\}_{(\a,\zeta)\leq(\b,\eta)\in\Delta\times\Lambda})$ is an inverse system, whose inverse limit is given by
\begingroup\makeatletter\def\f@size{8}\check@mathfonts
\[\varprojlim_{(\d,\lambda)\in\Delta\times\Lambda}Q_\d\sqcup S_\lambda=\l\{\vec{v}\in\prod_{(\d,\lambda)\in\Delta\times\Lambda}Q_\d\sqcup S_\lambda:v_{(\a,\zeta)}=\sigma_{(\a,\zeta),(\beta,\eta)}(v_{(\b,\eta)})\text{ for all }(\a,\zeta)\leq(\b,\eta)\r\}.\]
\endgroup
Consider the map
\[\Psi:Q\sqcup S\to\varprojlim_{(\d,\lambda)\in\Delta\times\Lambda}Q_\d\sqcup S_\lambda\]
given by
\[\Psi(v):=\begin{cases}(v_\d)_{(\d,\lambda)\in\Delta\times\Lambda},&\text{if }v=(v_\d)_{\d\in\Delta}\in Q;\\(v_\lambda)_{(\d,\lambda)\in\Delta\times\Lambda},&\text{if }v=(v_\lambda)_{\lambda\in\Lambda}\in S.\end{cases}\]
We claim that $\Psi$ is a quandle isomorphism. To see that $\Psi$ is injective, let $u,v\in Q\sqcup S$ with $u\neq v$. If $u\in Q$ and $v\in S$, then it is clear that $\Psi(v)\neq\Psi(v)$. Hence, we only need to consider the cases $u,v\in Q$ and $u,v\in S$. Suppose $u,v\in Q$. Since $Q=\varprojlim_{\d\in\Delta}Q_\d$, we may write $u=(u_\d)_{\d\in\Delta}$ and $v=(v_\d)_{\d\in\Delta}$. Since $u\neq v$, there exists $\a\in\Delta$ for which $u_\a\neq v_\a$. It follows that $\Psi(u)=(u_\d)_{(\d,\lambda)\in\Delta\times\Lambda}$ and $\Psi(v)=(v_\d)_{(\d,\lambda)\in\Delta\times\Lambda}$ differ in all coordinates $(\a,\lambda)\in\Delta\times\Lambda$, so that $\Psi(u)\neq\Psi(v)$. The argument for injectivity in the case $u,v\in S$ is the same. To see that $\Psi$ is surjective, consider an element $\vec{v}\in\varprojlim_{(\d,\lambda)\in\Delta\times\Lambda}Q_\d\sqcup S_\lambda$. Since
\[v_{(\a,\zeta)}=\sigma_{(\a,\zeta),(\b,\eta)}(v_{(\b,\eta)})=(\phi_{\a\b}\sqcup\psi_{\zeta\eta})(v_{(\b,\eta)})\]
for all $(\a,\zeta)\leq(\b,\eta)$, it follows from the least upper bound property of directed posets that if some entry $v_{(\a,\zeta)}$ of $\vec{v}$ lies in $Q_\a$ (resp., in $S_\zeta$), then all entries of $\vec{v}$ lie in one of the $Q_\d$ (resp., one of the $S_\lambda$). That is, the entries of an element $\vec{v}\in\varprojlim_{(\d,\lambda)\in\Delta\times\Lambda}Q_\d\sqcup S_\lambda$ either lie entirely in the components $Q_\d$ of $Q$ or the components $S_\lambda$ of $S$. Moreover, if $\vec{v}$ has entries entirely in the $Q_\d$ and if $(\a,\zeta)\leq(\a,\eta)$, with $\zeta\neq\eta$, then $v_{(\a,\zeta)}=v_{(\a,\eta)}$ since $\sigma_{(\a,\zeta),(\a,\eta)}=\phi_{\a\a}\sqcup\psi_{\zeta\eta}$, and similarly if $\vec{v}$ has entries entirely in the $S_\lambda$. Hence, every element $\vec{v}=(v_{(\d,\lambda)})_{(\d,\lambda)\in\Delta\times\Lambda}\in\varprojlim_{(\d,\lambda)\in\Delta\times\Lambda}Q_\d\sqcup S_\lambda$ can be written as either $(v_\d)_{(\d,\lambda)\in\Delta\times\Lambda}$ or $(v_\lambda)_{(\d,\lambda)\in\Delta\times\Lambda}$. This shows that $\Psi$ is surjective.

Finally, we need to show that $\Psi$ is a quandle homomorphism. We denote the quandle operation of $Q\sqcup S$ by $\tl_{\,\sqcup}$ and the quandle operation of $\varprojlim_{(\d,\lambda)\in\Delta\times\Lambda}Q_\d\sqcup S_\lambda$ by $\tl_{\,\lim}$. The quandle operations of each of the factors $Q_\d$ and $S_\lambda$ are denoted $\tl_{\,Q_\d}$ and $\tl_{\,S_\lambda}$, respectively. Let $u,v\in Q\sqcup S$. We break into cases:

\ul{\textbf{Case 1:}} $u,v\in Q$ or $u,v\in S$.

Suppose $u,v\in Q$. Observe that
\begin{align*}
\Psi(u\tl_{\,\sqcup}v)
&=\Psi(u\tl_{\,Q}v)\\
&=\Psi((u_\d\tl_{\,Q_\d}v_\d)_{\d\in\Delta})\\
&=(u_\d\tl_{\,Q_\d}v_\d)_{(\d,\lambda)\in\Delta\times\Lambda}\\
&=(u_\d\tl_{\,Q_\d\sqcup S_\lambda}v_\d)_{(\d,\lambda)\in\Delta\times\Lambda}\\
&=(u_\d)_{(\d,\lambda)\in\Delta\times\Lambda}\tl_{\,\lim}(v_\d)_{(\d,\lambda)\in\Delta\times\Lambda}\\
&=\Psi(u)\tl_{\,\lim}\Psi(v).
\end{align*}
The argument is the same when $u,v\in S$.

\ul{\textbf{Case 2:}} $u\in Q$ and $v\in S$, or $u\in S$ and $v\in Q$.

Suppose $u\in Q$ and $v\in S$. Observe that
\begin{align*}
\Psi(u\tl_{\,\sqcup}v)
&=\Psi(u)\\
&=(u_\d)_{(\d,\lambda)\in\Delta\times\Lambda}\\
&=(u_\d\tl_{\,Q_\d\sqcup S_\lambda}v_\lambda)_{(\d,\lambda)\in\Delta\times\Lambda}\\
&=(u_\d)_{(\d,\lambda)\in\Delta\times\Lambda}\tl_{\,\lim}(v_\lambda)_{(\d,\lambda)\in\Delta\times\Lambda}\\
&=\Psi(u)\tl_{\,\lim}\Psi(v).
\end{align*}
The argument is the same when $u\in S$ and $v\in Q$.

Since both cases exhaust all possibilities for $u,v\in Q\sqcup S$, it follows that $\Psi$ is a quandle homomorphism. Since $\Psi$ is also bijective, it follows that $\Psi$ is a quandle isomorphism. Hence $Q\sqcup S$ is profinite.
\end{proof}

Proposition \ref{binaryproducts} in fact follows from a more general theorem, the proof of which is more involved.  We stated and proved the case of finite products separately because of the strong parallels between its proof as given and the proof of the preservation of profiniteness under finite disjoint unions.

\begin{theorem} \label{arbitraryproducts}
    Let $\{ Q_i \;|\; i\in \mathcal{I} \}$ be a set of profinite quandles, then $\prod_{i\in \mathcal{I}}Q_i$ with coordinatewise operations is a profinite quandle.
\end{theorem}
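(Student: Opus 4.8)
The plan is to realize $\prod_{i\in\mathcal{I}}Q_i$ as an inverse limit of finite quandles directly, rather than trying to re-use the $\Delta\times\Lambda$ indexing of Proposition \ref{binaryproducts}. Write each factor as $Q_i=\varprojlim_{\lambda\in\Lambda_i}Q_i^\lambda$, an inverse limit of finite quandles $Q_i^\lambda$ with transition maps $\varphi^i_{\mu\lambda}:Q_i^\lambda\to Q_i^\mu$ (for $\mu\leq\lambda$ in $\Lambda_i$) and projections $\pi_i^\lambda:Q_i\to Q_i^\lambda$. The naive attempt to index by $\prod_{i\in\mathcal{I}}\Lambda_i$ fails, because $\prod_{i\in\mathcal{I}}Q_i^{\lambda_i}$ is infinite when $\mathcal{I}$ is infinite. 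Instead I would take the index set $J$ of pairs $(F,\sigma)$ where $F\subseteq\mathcal{I}$ is \emph{finite} and $\sigma$ assigns to each $i\in F$ some $\sigma(i)\in\Lambda_i$, ordered by $(F,\sigma)\leq(F',\sigma')$ iff $F\subseteq F'$ and $\sigma(i)\leq\sigma'(i)$ for all $i\in F$. To each such pair I associate the \emph{finite} quandle $Q_{(F,\sigma)}:=\prod_{i\in F}Q_i^{\sigma(i)}$. First I would check $J$ is directed: given $(F,\sigma)$ and $(F',\sigma')$, set $F''=F\cup F'$ and, using directedness of each $\Lambda_i$, choose $\sigma''(i)$ to be an upper bound of the (at most two) indices prescribed by $\sigma,\sigma'$ at $i$; then $(F'',\sigma'')$ dominates both.

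Next I would define transition maps $\Theta_{(F,\sigma),(F',\sigma')}:Q_{(F',\sigma')}\to Q_{(F,\sigma)}$, for $(F,\sigma)\leq(F',\sigma')$, by $(x_i)_{i\in F'}\mapsto(\varphi^i_{\sigma(i)\sigma'(i)}(x_i))_{i\in F}$, that is, project onto the coordinates indexed by $F$ and apply the factor transition maps. These are quandle homomorphisms because projections and the $\varphi^i$ are, and the operation on each $Q_{(F,\sigma)}$ is coordinatewise; the inverse-system identities $\Theta_{(F,\sigma),(F,\sigma)}=\mathrm{id}$ and $\Theta_{(F,\sigma),(F'',\sigma'')}=\Theta_{(F,\sigma),(F',\sigma')}\circ\Theta_{(F',\sigma'),(F'',\sigma'')}$ reduce coordinatewise to the corresponding identities $\varphi^i_{\sigma(i)\sigma''(i)}=\varphi^i_{\sigma(i)\sigma'(i)}\circ\varphi^i_{\sigma'(i)\sigma''(i)}$ for each factor system. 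This is routine bookkeeping, so $(\{Q_{(F,\sigma)}\},\{\Theta\})$ is an inverse system of finite quandles.

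Then I would construct the comparison map $\Psi:\prod_{i\in\mathcal{I}}Q_i\to\varprojlim_J Q_{(F,\sigma)}$ sending $q=(q_i)_{i\in\mathcal{I}}$ to the family whose $(F,\sigma)$-component is $(\pi_i^{\sigma(i)}(q_i))_{i\in F}$; compatibility with the $\Theta$ is immediate from $\varphi^i_{\mu\lambda}\circ\pi_i^\lambda=\pi_i^\mu$. Since every operation in sight is coordinatewise and the $\pi_i^\lambda$ are quandle homomorphisms, $\Psi$ is a quandle homomorphism, and it is continuous because preimages of slim basic open sets pull back to products of preimages of slim basic opens of the factors. Injectivity is easy: evaluating on singletons $(\{i\},\lambda)$ recovers every coordinate $\pi_i^\lambda(q_i)$, and these separate points of $\prod_i Q_i$. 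For the isomorphism I would finish either by the compactness/density argument of Proposition \ref{binaryproducts} (here $\prod_i Q_i$ is compact by Tychonoff, each $Q_i$ being a Stone space), or more directly: given a compatible family $x=(x_{(F,\sigma)})$, the singleton entries $x_{(\{i\},\lambda)}\in Q_i^\lambda$ are $\Lambda_i$-compatible, hence assemble into $q_i\in\varprojlim_\lambda Q_i^\lambda=Q_i$, and one checks coordinatewise that $\Psi((q_i)_i)=x$ because the $(F,\sigma)$-entry of $x$ is determined by its singleton restrictions. Either way $\Psi$ is an isomorphism and $\prod_{i\in\mathcal{I}}Q_i$ is profinite.

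The main obstacle is conceptual and lies in the very first step: choosing the right diagram. Unlike a finite product, one cannot index by the product of the factor posets, and the correct move, indexing by finite subsets of $\mathcal{I}$ together with a coordinatewise choice of level, must be identified before any verification can proceed. Once $J$ is in hand, every remaining step is a coordinatewise reduction to the corresponding fact for a single factor system, mirroring (and slightly generalizing) the argument of Proposition \ref{binaryproducts}.
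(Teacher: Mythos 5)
Your proposal is correct and is essentially the paper's own proof: the index set of pairs $(F,\sigma)$ with $F\subseteq\mathcal{I}$ finite is exactly the paper's directed set $\Lambda_{\mathcal{I}}=\coprod_{J}\prod_{i\in J}\Lambda_i$, with the same order, the same finite quandles $\prod_{i\in F}Q_i^{\sigma(i)}$, the same projection-then-transition maps, and the same key observation that a compatible family is determined by its singleton-indexed entries. The only differences are presentational: you phrase the identification through an explicit comparison map $\Psi$ (with a compactness/density fallback), while the paper identifies the elements directly and handles the topology by noting the restricted projection is a continuous open bijection.
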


\begin{proof}
    Let $Q_i = \varprojlim_{\lambda \in \Lambda_i} Q_\lambda$  for each $i\in \mathcal{I}$. Here we suppress mention of the maps $\phi_{\lambda,\mu}:Q_\lambda \rightarrow Q_\mu$ for $\mu \leq \lambda \in \Lambda_i$, though we will have cause to mention them later. We proceed by showing that $\prod_{i\in \mathcal{I}}Q_i$ can also be realized as the limit of an inverse system of finite quandles over a directed set built from the $\Lambda_i$'s and the finite subsets of $\mathcal{I}$ ordered by reverse inclusion.

    The underlying set of this directed set is

    \[ \Lambda_\mathcal{I} := \coprod_{\{J\subseteq \mathcal{I} \; | \; 0 < |J| < \infty \}} \prod_{i\in J} \Lambda_i .\]

    We denote elements of this disjoint union of products by $(J,\langle \lambda_i \rangle_{i\in J})$, where the elements of the $J$-indexed tuple lie in the directed set corresponding to the index, that is $\lambda_i \in \Lambda_i$.

    This set is then equipped with a partial order given by $(J,\langle \lambda_i \rangle_{i\in J}) \leq (K,\langle \mu_i \rangle_{i\in K}) $ precisely when $J\subseteq K$ and for all $i\in J$, $\lambda_i \leq \mu_i$ in the partial ordering which makes $\Lambda_i$ a directed set.  Reflexivity and anti-symmetry are trivial and transitivity is easy to check.

    To see that $(\Lambda_\mathcal{I},\leq)$ is a directed set, consider $(J,\langle \lambda_i \rangle_{i\in J})$ and $(K,\langle \mu_i \rangle_{i\in K})$.  Then $(J\cup K, \langle \nu_i \rangle_{i\in J\cup K})$ where $\nu_i = \lambda_i$ if $i\in J\setminus K$, $\nu_i = \mu_i$ if $i \in K\setminus J$ and $\nu_i \in \Lambda_i$ is chosen so that $\lambda_i \leq \nu_i$ and $\mu_i \leq \nu_i$ in $\Lambda_i$ by the directedness of $\Lambda_i$ whenever $i\in J\cap K$, plainly provides the needed element greater than or equal to both $(J,\langle \lambda_i \rangle_{i\in J})$ and $(K,\langle \mu_i \rangle_{i\in K})$.

    Now, consider the inverse system of finite quandles indexed by $\Lambda_\mathcal{I}$, in which

    \[ Q_{(J,\langle \lambda_i \rangle_{i\in J})} := \prod_{i\in J} Q_{\lambda_i}\]

    \noindent and the projection $\phi_{(J,\langle \lambda_i \rangle),(K,\langle \mu_i\rangle)}$ is given by the projection $\pi: \prod_J Q_{\lambda_i}\rightarrow \prod_K Q_{\lambda_i}$ onto the coordinates lying in $K$, followed by $\prod_K \phi_{\lambda_i,\mu_i}$, the product of the quandle homomorphisms in the diagrams over the $\Lambda_i$'s for $i\in K$ whose limits are the original $Q_i$'s.

    We claim that the product $\prod_{i\in \mathcal{I}}Q_i$ is the limit of this inverse system.  We proceed by first showing that the same data determines the elements of both:

    An element of $\prod_{i\in \mathcal{I}}Q_i$ is in the first instance an $\mathcal{I}$-tuple of elements of the $Q_i$'s for $i\in \mathcal{I}$, but an element of $Q_i$ is a $\Lambda_i$-tuple $\langle q_\lambda \rangle_{\lambda \in \Lambda_i} $ with $q_\lambda \in Q_\lambda$ of elements of the finite quandles $Q_\lambda$ $\lambda\in \Lambda_i$ satisfying the compatibility condition $\phi_{\lambda,\mu}(q_\mu) = q_\lambda$.

    Now, observe that this is the same data that gives the portion of the $\Lambda_\mathcal{I}$-tuple (satisfying a corresponding compatibility condition) in entries where the set in the index is a singleton.  But the compatibility condition implies that the rest of the entries are completely determined by those with a singleton as the indexing set.

    It remains only to show that the topologies induced by the two constructions agree.  Elements of the product of the profinite quandles are those elements of $\prod_{i\in\mathcal{I}} \prod_{\lambda\in \Lambda_i} Q_i$, with the entries in the $\Lambda_i$-tuples satisfying the compatibility condition for each $i \in \mathcal{I}$. We may identify them with the coordinates of elements in 
    $\varprojlim_{\lambda \in \Lambda_i} Q_{(J,\langle\lambda_i \rangle_\in J)}$ in which $J$ is a singleton.  The argument above showed that the restriction of the projection from $\prod_\mathcal{I} \prod_{\Lambda_i} Q_{\lambda_i}$ to those coordinates gives a bijection between the limit we claim witnesses the profiniteness of the product and the product itself.  But projections are continuous open maps, and therefore the restriction is a homeomorphism.
\end{proof}


\subsection{Subquandles of Profinite Quandles}

Given a profinite quandle, it is natural to inquire as to whether or not its subquandles are also profinite. We provide a necessary and equivalent condition for when subquandles of a profinite quandle are profinite.

It is easy to see that in general subquandles of profinite quandles need not themselves be profinite:  any infinite residually finite quandle (with the discrete toplogy) may be identified with a proper non-profinite subquandle of its profinite completion, as for example $\Tak({\mathbb Z}) \prec \Tak(\hat{\mathbb Z}).$

In giving a characterization of those subquandles of a profinite quandle which are themselves profinite, it will be useful to have a characterization of dense subquandles.

\begin{proposition} \label{density}
    If $Q$ is a profinite quandle with inverse system $\{\{ Q_\lambda \}_{\lambda \in \Lambda}, \{ \varphi_{\alpha\beta} \}_{\alpha \leq \beta \in \Lambda}\}$ and $S \preceq Q$, then $S$ is dense if and only if the restrictions of the projection homomorphisms $\pi_\lambda:Q\rightarrow Q_\lambda$ to $S$ are surjective.
\end{proposition}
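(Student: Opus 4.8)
The plan is to work entirely with the slim basis $\mc{B} = \{\pi_\lambda^{-1}(x) : x \in Q_\lambda,\ \lambda \in \Lambda\}$ established above. Since $\mc{B}$ is a basis for the Stone topology on $Q$, every nonempty open set contains a nonempty slim basic open set, so $S$ is dense if and only if $S$ meets every nonempty member of $\mc{B}$. The crux is then the elementary observation that $S \cap \pi_\lambda^{-1}(x) \neq \varnothing$ precisely when $x \in \pi_\lambda(S)$; that is, $S$ meets the slim basic open set $\pi_\lambda^{-1}(x)$ exactly when $x$ lies in the image of the restriction $\pi_\lambda|_S$. This reduces the whole equivalence to comparing the statement ``$S$ meets every nonempty slim basic open set'' with the statement ``each $\pi_\lambda|_S$ is surjective.''

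For the reverse implication I would suppose each $\pi_\lambda|_S$ is surjective. Given any nonempty $\pi_\lambda^{-1}(x) \in \mc{B}$, its nonemptiness forces $x \in \pi_\lambda(Q) \subseteq Q_\lambda$, and surjectivity of $\pi_\lambda|_S$ then produces some $s \in S$ with $\pi_\lambda(s) = x$, whence $s \in S \cap \pi_\lambda^{-1}(x)$. Thus $S$ meets every nonempty slim basic open set and so is dense. This direction needs no further input.

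For the forward implication I would suppose $S$ is dense and fix $\lambda \in \Lambda$ and $x \in Q_\lambda$. To conclude $x \in \pi_\lambda(S)$ from density I first need $\pi_\lambda^{-1}(x)$ to be nonempty, i.e.\ that $\pi_\lambda : Q \to Q_\lambda$ is itself surjective. This is where I invoke the standing assumption that the transition maps $\varphi_{\alpha\beta}$ are surjective: for an inverse system of nonempty finite sets with surjective bonding maps over a directed poset, each projection from the inverse limit is surjective (a standard compactness / finite-intersection-property argument, as in \cite{wilson1998profinite}). Granting this, $\pi_\lambda^{-1}(x)$ is a nonempty slim basic open set, density yields some $s \in S \cap \pi_\lambda^{-1}(x)$, and $\pi_\lambda(s) = x$ then shows $\pi_\lambda|_S$ is surjective.

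The only genuinely nontrivial ingredient is the surjectivity of the projections $\pi_\lambda$, which rests on the finiteness of the factors $Q_\lambda$ (hence compactness) together with the surjectivity of the bonding maps; everything else is a direct translation between density on the slim basis and fibrewise surjectivity. I expect this surjectivity fact to be the main point to pin down carefully, though it is entirely standard and may simply be cited rather than reproven.
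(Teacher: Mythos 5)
Your proof is correct and follows essentially the same route as the paper's: reduce density to meeting every nonempty slim basic open set $\pi_\lambda^{-1}(x)$, then translate that into fibrewise surjectivity of $\pi_\lambda|_S$. Your only addition is to justify explicitly that each projection $\pi_\lambda$ is itself surjective (via the standing assumption of surjective transition maps), a point the paper's terser proof leaves implicit but which is indeed needed for the forward direction.
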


 \begin{proof} 
Recall that a subspace being dense is equivalent to having a non-empty intersection with every non-empty open set, which is in turn equivalent to having a non-empty intersection with every open set in a basis.  Applying this observation to the slim basis, we see that $S$ is dense if and only if it intersects every $\pi_\lambda^{-1}(x)$. This is equivalent to every $x\in Q_\lambda$ having a nonempty preimage under $\pi_\lambda |_S$ for all $\lambda\in \Lambda$, thus establishing the proposition.
 \end{proof}



\begin{proposition} \label{profinitesubisclosed}
Let $Q$ be a profinite quandle. A subquandle $S\preceq Q$ is profinite if and only if $S$ is closed in $Q$.
\footnote{This result was independently obtained by Singh \cite{S23} Proposition 4.8.}
\end{proposition}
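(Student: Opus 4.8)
The plan is to prove the two directions separately, the forward one being formal and the reverse one carrying the real content. Throughout, ``$S$ is profinite'' is understood with respect to the subspace topology inherited from $Q$, which is the topology that will be matched to an inverse-limit topology at the end. For the forward direction, suppose $S$ is profinite. Then, as established in the discussion preceding the slim basis proposition, a profinite quandle is a Stone space, hence compact. Since $Q$ is itself a Stone space and therefore Hausdorff, every compact subspace of $Q$ is closed, so $S$ is closed in $Q$. This direction needs no construction.

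For the reverse direction, suppose $S$ is closed, and build an inverse system of finite quandles witnessing the profiniteness of $S$ out of the images of $S$ under the projections. Set $S_\lambda := \pi_\lambda(S) \subseteq Q_\lambda$ for each $\lambda \in \Lambda$. Since $\pi_\lambda|_S : S \to Q_\lambda$ is a quandle homomorphism and $Q_\lambda$ is finite, each $S_\lambda$ is a finite subquandle of $Q_\lambda$. Using $\pi_\alpha = \varphi_{\alpha\beta}\circ\pi_\beta$ one computes $\varphi_{\alpha\beta}(S_\beta) = \varphi_{\alpha\beta}(\pi_\beta(S)) = \pi_\alpha(S) = S_\alpha$, so the transition maps restrict to surjective quandle homomorphisms $S_\beta \to S_\alpha$, and $(\{S_\lambda\}_{\lambda\in\Lambda}, \{\varphi_{\alpha\beta}|_{S_\beta}\})$ is an inverse system of finite quandles with surjective bonding maps. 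Because the $\pi_\lambda|_S$ are compatible with these transition maps, they induce a quandle homomorphism $j : S \to \varprojlim_{\lambda} S_\lambda$, $s \mapsto (\pi_\lambda(s))_{\lambda}$; write $p_\lambda$ for the projections of $\varprojlim_\lambda S_\lambda$, so that $p_\lambda \circ j = \pi_\lambda|_S$.

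It then remains to show $j$ is an isomorphism of topological quandles, which is where closedness is used. Injectivity is immediate, since the coordinates $\pi_\lambda$ separate points of $Q \supseteq S$, so $j$ is a restriction of an injection. Being closed in the compact space $Q$, the subquandle $S$ is compact, hence $j(S)$ is a compact, and therefore closed, subset of the Hausdorff space $\varprojlim_\lambda S_\lambda$. At the same time $j(S)$ is dense: by construction $p_\lambda(j(S)) = \pi_\lambda(S) = S_\lambda$ for every $\lambda$, so the projections of $j(S)$ are surjective, and Proposition \ref{density} applied to $j(S) \preceq \varprojlim_\lambda S_\lambda$ gives density. A dense closed subset of a topological space is the whole space, so $j$ is surjective. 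Finally $j$ is a continuous bijection from a compact space to a Hausdorff space, hence a homeomorphism; being simultaneously a bijective quandle homomorphism, it is a quandle isomorphism, and it identifies the subspace topology on $S$ with the inverse-limit topology on $\varprojlim_\lambda S_\lambda$. Thus $S$ is profinite.

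The main obstacle I anticipate is conceptual rather than computational: keeping straight that ``profinite'' carries genuine topological content. The whole purpose of the closedness hypothesis is to supply the compactness of $S$ that forces the a priori merely injective and dense image $j(S)$ to fill all of $\varprojlim_\lambda S_\lambda$. Without compactness one would only obtain a continuous injection with dense image, which need not be a homeomorphism, and the subspace topology on $S$ could fail to agree with any inverse-limit-of-finite-quandles topology; the argument must therefore be organized so that compactness is invoked precisely at the step converting density into surjectivity and continuity into homeomorphism.
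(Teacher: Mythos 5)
Your proof is correct and follows essentially the same route as the paper: both directions match, with the forward direction via compactness in a Hausdorff space and the reverse direction via the inverse system of images $\pi_\lambda(S)$ with restricted transition maps, injectivity from separation of points, and surjectivity from compactness of the closed subquandle $S$ plus density supplied by Proposition \ref{density}. Your write-up is in fact slightly more careful than the paper's on two minor points---explicitly verifying that $\varphi_{\alpha\beta}(\pi_\beta(S)) = \pi_\alpha(S)$ so the restricted system is well-defined, and noting that the continuous bijection from a compact space to a Hausdorff space is a homeomorphism, so the subspace topology agrees with the inverse-limit topology---but these are refinements of the same argument, not a different one.
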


\begin{proof}
Let $(\{Q_\lambda\}_{\lambda\in\Lambda},\{\phi_{\a\b}\}_{\a\leq\b\in\Lambda})$ be an inverse system for $Q$ over a directed poset $\Lambda$ and denote the projections $\pi_\lambda:Q\to Q_\lambda$.

$(\Rightarrow)$ Suppose $S\preceq Q$ is profinite. This implies that $S$ is compact. Since $Q$ is Hausdorff, this implies that $S$ is closed in $Q$.

$(\Leftarrow)$ Suppose $S$ is closed in $Q$. We can restrict $\varphi_{\alpha\beta}$ to $\pi_\beta(S)$, denoted $\hat\varphi_{\alpha\beta} := \varphi_{\alpha\beta}|_{\pi_\beta(S)},$ which forms an inverse system $(\{\pi_\lambda(S)\}_{\lambda \in \Lambda},\{\hat\varphi_{\alpha\beta}\}_{\alpha \leq \beta \in \Lambda})$ with projections $\rho_\alpha : \varprojlim \pi_\alpha(S) \to \pi_\alpha(S)$. There are maps from $S$ to each $\pi_\alpha(S),$ so by the universal property of limits, there is a unique continuous quandle homomorphism $\Psi: S \to \varprojlim \pi_\alpha(S).$

\[\begin{tikzcd}
	& {S} \\
	& {\varprojlim \pi_\alpha(S)} \\
	{\pi_\alpha(S)} && {\pi_\beta(S)}
	\arrow["{\rho_\beta}"{description}, from=2-2, to=3-3]
	\arrow["{\rho_\alpha}"{description}, from=2-2, to=3-1]
	\arrow["{\hat\varphi_{\alpha\beta}}"', from=3-3, to=3-1]
	\arrow["{\pi_\beta}", from=1-2, to=3-3]
	\arrow["{\pi_\alpha}"', from=1-2, to=3-1]
	\arrow["{\exists!\Psi}", dashed, from=1-2, to=2-2]
\end{tikzcd}\]

We claim $\Psi$ is an isomorphism. To see that $\Psi$ is injective, let $q_1,q_2 \in S$ such that $\Psi(q_1) = \Psi(q_2).$ Then for all $\alpha \in \Lambda,$
\[\rho_\alpha(\Psi(q_1)) = \rho_\alpha(\Psi(q_2)) \Rightarrow \pi_\alpha(q_1) = \pi_\alpha(q_2) \Rightarrow q_1 = q_2.\]

For surjectivity, $S$ is a closed subspace of a compact space, hence $S$ is compact. Furthermore, $\Psi(S) \subseteq \varprojlim \pi_\alpha(S)$ is compact and therefore closed since $\varprojlim \pi_\alpha(S)$ is Hausdorff. But by construction all of the maps $\pi_\alpha:S\rightarrow \pi_\alpha(S)$ are surjective, thus by Proposition \ref{density} $S$ is dense in the profinite $\varprojlim \pi_\alpha(S)$ and we are done.


\end{proof}

An identical proof to that of Proposition \ref{profinitesubgroupisclosed}, excepting that the word ``quandle'' replaced with ''group'' throughout, gives us

\begin{proposition} \label{profinitesubgroupisclosed}
Let $G$ be a profinite group. A subgroup $S < G$ is profinite if and only if $S$ is closed in $G$.
\end{proposition}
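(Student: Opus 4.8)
The plan is to transcribe, essentially verbatim, the argument given for Proposition~\ref{profinitesubisclosed}, replacing every occurrence of ``quandle'' with ``group'' and every quandle homomorphism with a group homomorphism. This works because the ambient topological structure on a profinite group is built in exactly the same way as for a profinite quandle: one fixes an inverse system $(\{G_\lambda\}_{\lambda\in\Lambda},\{\phi_{\alpha\beta}\}_{\alpha\leq\beta\in\Lambda})$ of finite groups with $G=\varprojlim_\lambda G_\lambda$, gives each $G_\lambda$ the discrete topology, and endows $G$ with the Stone subspace topology inherited from $\prod_\lambda G_\lambda$. As remarked after the slim-basis proposition, both that proposition and Proposition~\ref{density} hold for profinite groups with identical proofs; so first I would record the group version of Proposition~\ref{density}, namely that $S<G$ is dense if and only if each restricted projection $\pi_\lambda|_S$ is surjective onto $\pi_\lambda(S)$.

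For the forward direction, if $S$ is profinite then it is a Stone space, hence compact; since $G$ is Hausdorff, compact subsets are closed, so $S$ is closed in $G$.

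For the reverse direction, suppose $S$ is closed. Then $S$ is a closed subspace of the compact space $G$, hence itself compact. Restricting each $\phi_{\alpha\beta}$ to $\pi_\beta(S)$ yields an inverse system $(\{\pi_\lambda(S)\}_{\lambda\in\Lambda},\{\hat\phi_{\alpha\beta}\}_{\alpha\leq\beta\in\Lambda})$ of finite groups, and the universal property of the inverse limit produces a unique continuous group homomorphism $\Psi:S\to\varprojlim_\lambda \pi_\lambda(S)$ compatible with the projections $\rho_\alpha$. Injectivity of $\Psi$ follows because the projections separate points: if $\Psi(s_1)=\Psi(s_2)$, then $\pi_\alpha(s_1)=\pi_\alpha(s_2)$ for all $\alpha\in\Lambda$, which forces $s_1=s_2$. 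For surjectivity, $\Psi(S)$ is the continuous image of a compact set, hence compact, and therefore closed in the Hausdorff space $\varprojlim_\lambda \pi_\lambda(S)$; since each $\pi_\alpha:S\to\pi_\alpha(S)$ is surjective by construction, the group version of Proposition~\ref{density} shows $\Psi(S)$ is dense, so $\Psi(S)$ equals the whole limit. Thus $\Psi$ is a continuous bijective homomorphism between profinite groups, hence an isomorphism, exhibiting $S$ as profinite.

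The only point requiring any care --- and it is the same point as in the quandle case --- is confirming that the topological ingredients (the slim basis, the compactness of the limit, the Hausdorffness needed to pass from compact to closed, and the density criterion) all transfer to groups. Since the Stone topology on a profinite group is defined identically to that on a profinite quandle, and since none of those arguments invoked any quandle-specific axiom, this transfer is immediate and no genuine new obstacle arises; the substance of the proof is already contained in Proposition~\ref{profinitesubisclosed}.
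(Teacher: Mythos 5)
Your proposal is correct and is exactly the paper's own proof: the paper proves this proposition by declaring it to be the proof of Proposition~\ref{profinitesubisclosed} with ``quandle'' replaced by ``group'' throughout, which is precisely the transcription you carry out (compactness of a profinite $S$ plus Hausdorffness of $G$ for one direction; the restricted inverse system $\{\pi_\lambda(S)\}$, the universal-property map $\Psi$, injectivity from separation of points, and compact-image-plus-density for the other). One tiny wording slip: the group version of Proposition~\ref{density} should say $\pi_\lambda|_S$ surjects onto the factors of the ambient limit (onto $G_\lambda$ for density in $G$), but since you apply it to $\Psi(S)$ inside $\varprojlim_\lambda \pi_\lambda(S)$, where the factors are the $\pi_\lambda(S)$ themselves, your use of it is correct.
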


We will have occasion to use this in one of our examples below.

Topological closure gives a way of producing profinite subquandles from arbitrary subquandles due the the following result about general topological quandles.

\begin{proposition}
Let $Q$ be a topological quandle and $Q' \preceq Q$ be a subquandle. Then $\overline{Q'},$ the topological closure of $Q',$ is a subquandle of $Q$.
\end{proposition}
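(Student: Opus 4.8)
The plan is to show that $\overline{Q'}$ is closed under both quandle operations $\triangleleft$ and $\triangleleft^{-1}$, which is precisely what it means to be a subquandle. Since $Q'$ is a subquandle we have $\triangleleft(Q' \times Q') \subseteq Q'$ and $\triangleleft^{-1}(Q' \times Q') \subseteq Q'$, and the hypothesis that $Q$ is a topological quandle supplies the continuity of both operations as maps $Q \times Q \to Q$. The entire argument then reduces to a single soft topological fact about continuous binary operations and closures, which I would apply once to each operation.

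The key step is the following general claim: if $f : Q \times Q \to Q$ is continuous and $A \subseteq Q$ satisfies $f(A \times A) \subseteq A$, then $f(\overline{A} \times \overline{A}) \subseteq \overline{A}$. To prove this I would invoke two standard facts. First, in the product topology the closure of a product is the product of the closures, so $\overline{A \times A} = \overline{A} \times \overline{A}$. Second, continuity of $f$ gives $f(\overline{B}) \subseteq \overline{f(B)}$ for every subset $B \subseteq Q \times Q$. Chaining these with $B = A \times A$ yields
\[
f(\overline{A} \times \overline{A}) = f(\overline{A \times A}) \subseteq \overline{f(A \times A)} \subseteq \overline{A},
\]
where the final inclusion uses monotonicity of closure together with $f(A \times A) \subseteq A$.

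Applying this claim with $f = \triangleleft$ and $A = Q'$ shows that $\overline{Q'}$ is closed under $\triangleleft$, and applying it with $f = \triangleleft^{-1}$ and $A = Q'$ shows that $\overline{Q'}$ is closed under $\triangleleft^{-1}$; together these establish $\overline{Q'} \preceq Q$.

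I do not anticipate a serious obstacle, since the result is a formal consequence of continuity rather than anything specific to quandles. The only point requiring care is the identity $\overline{A \times A} = \overline{A} \times \overline{A}$: this genuinely holds in the product topology (indeed for arbitrary products of closures), but it is worth stating explicitly rather than conflating it with the merely one-sided inclusion $\overline{A} \times \overline{A} \subseteq \overline{A \times A}$ that one might otherwise reach for. Everything else is a routine application of the characterization of continuity in terms of closures.
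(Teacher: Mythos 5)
Your proof is correct, but it takes a genuinely different route from the paper's. The paper argues pointwise by contradiction: assuming $x \triangleleft y \notin \overline{Q'}$ for some $x,y \in \overline{Q'}$, it uses continuity of $\triangleleft$ to pull the open set $Q \setminus \overline{Q'}$ back to a neighborhood $U$ of $(x,y)$, shrinks to a basic box $V_x \times V_y \subseteq U$, and then uses the fact that $x$ and $y$ are closure points of $Q'$ to choose $u \in V_x \cap Q'$ and $v \in V_y \cap Q'$, so that $u \triangleleft v \in Q'$ contradicts $(u,v) \in U$. You instead package the same continuity input into two closure-operator identities, $\overline{A \times A} = \overline{A} \times \overline{A}$ and $f(\overline{B}) \subseteq \overline{f(B)}$, and conclude via the chain $f(\overline{A} \times \overline{A}) = f(\overline{A \times A}) \subseteq \overline{f(A \times A)} \subseteq \overline{A}$. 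Both of the standard facts you invoke are genuinely true (the product-closure identity holds in the product topology, indeed for arbitrary products, and the closure inclusion is a standard characterization of continuity), so your argument is sound, and it is strictly more general: it shows that the closure of any subset stable under a continuous operation is again stable, which applies verbatim to racks or any topological algebra. One small point in your favor over the paper's write-up: you treat $\triangleleft^{-1}$ explicitly by a second application of the lemma, whereas the paper's proof verifies only closure under $\triangleleft$ and leaves the identical argument for $\triangleleft^{-1}$ implicit, even though the paper's own definition of subquandle requires closure under both operations. The trade-off is that the paper's neighborhood argument is self-contained and elementary, while yours front-loads two citations but yields a shorter and reusable proof.
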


\begin{proof}
Let $x,y \in \overline{Q'}$ and suppose $x \triangleleft y \not\in \overline{Q'}.$ We have $\overline{Q'}^c$ is open, so there is a neighborhood $U \subseteq Q \times Q$ of $(x,y)$ such that $u \triangleleft v \not\in \overline{Q'}$ for every $(u,v) \in U.$ Furthermore, there exists neighborhoods $V_x,V_y \subseteq Q$ of $x$ and $y,$ respectively, such that $V_x \times V_y \subseteq U.$ Let $u \in V_x \cap Q'$ and $v \in V_y \cap Q'$. Then $u \triangleleft v \in Q' \subseteq \overline{Q'}$, but $(u,v) \in U$ by construction, a contradiction.
\end{proof}

\section{Profinite Analogs of Inn(Q)} 

It is fairly easy to show that the inner automorphism group of a profinite group is always profinite, being the quotient of the profinite group by its center which is necessarily a closed normal subgroup. However, as Counterexample \ref{nonprofiniteInn} shows, the inner automorphism group of a profinite quandle need not be a profinite group. Therefore, we introduce profinite analogs of the inner automorphism group (i.e. profinite groups that act continously on the profinite quandle in a way analogous to the action of the inner automorphism group).

\begin{definition}[Profinite inner automorphism group]
Let $Q$ be a profinite quandle with inverse system $(\{Q_\lambda\}_{\lambda \in \Lambda},\{\varphi_{\a\b}\}_{\a\leq\b\in\Lambda})$. The functor $\Inn$ provides an inverse system over $\{\Inn(Q_\lambda)\}_{\lambda\in\Lambda}$, so define the \textit{profinite inner automorphism group over D} to be $\widehat{\Inn}_D(Q):=\varprojlim\Inn(Q_\a)$, where $D$ refers to the diagram of the prescribed inverse system.
\end{definition}

This construction explicitly depends on the diagram used to witness profiniteness; given two filtered diagrams $D$ and $D'$ under a profinite quandle $Q$, $\pInn_D(Q)$ and $\pInn_{D'}(Q)$ are not necessarily isomorphic. While this presents an obvious disadvantage, $\pInn_D(Q)$ still provides a description of a $Q$ in terms of the inner automorphism groups of the component quandles while also being profinite. Furthermore, there are properties of $\pInn_D(Q)$ that arise independent of the choice of diagram.

\begin{proposition}\label{Inn_dense}
Let $Q$ be a profinite quandle with inverse system $(\{Q_\lambda\}_{\lambda \in \Lambda},\{\varphi_{\a\b}\}_{\a \leq \b \in \Lambda})$. Then $\Inn(Q)$ is isomorphic to a dense subgroup of $\pInn_D(Q).$
\end{proposition}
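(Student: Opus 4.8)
The plan is to build an injective group homomorphism $\Phi \colon \Inn(Q) \to \pInn_D(Q)$ with dense image; this immediately realizes $\Inn(Q)$ as isomorphic to the dense subgroup $\Phi(\Inn(Q))$. The map will come from the universal property of the inverse limit $\pInn_D(Q) = \varprojlim \Inn(Q_\lambda)$, so the first job is to produce a compatible cone of group homomorphisms $\Inn(Q) \to \Inn(Q_\lambda)$.

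First I would invoke the functoriality of $\Inn$ on surjective quandle homomorphisms. For a surjective $f \colon Q \to Q'$ the induced map $f_* \colon \Inn(Q) \to \Inn(Q')$ sends a generating symmetry $S_y$ to $S_{f(y)}$; this is well defined precisely because $f$ is surjective, and on generators one checks the intertwining relation $f \circ S_y = S_{f(y)} \circ f$, which extends to $f \circ g = f_*(g) \circ f$ for all $g \in \Inn(Q)$. Applying this to the projections $\pi_\lambda \colon Q \to Q_\lambda$ (which are surjective, since $Q$ is trivially dense in itself and Proposition \ref{density} then forces $\pi_\lambda = \pi_\lambda|_Q$ to be onto) yields homomorphisms $(\pi_\lambda)_* \colon \Inn(Q) \to \Inn(Q_\lambda)$. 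Because $\varphi_{\alpha\beta} \circ \pi_\beta = \pi_\alpha$, functoriality gives $\Inn(\varphi_{\alpha\beta}) \circ (\pi_\beta)_* = (\pi_\alpha)_*$, so these maps form a cone over $\{\Inn(Q_\lambda)\}$, and the universal property delivers a unique $\Phi$ with $\rho_\lambda \circ \Phi = (\pi_\lambda)_*$, where $\rho_\lambda$ are the limit projections.

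Next I would verify injectivity and density. For injectivity, if $\Phi(g) = \Id$ then $(\pi_\lambda)_*(g) = \Id$ for all $\lambda$, and the intertwining relation forces $\pi_\lambda \circ g = \pi_\lambda$; since distinct elements of $Q = \varprojlim Q_\lambda$ are separated by the coordinate projections, this gives $g = \Id$. For density I would use the group analogue of Proposition \ref{density}: a subgroup of $\varprojlim \Inn(Q_\lambda)$ is dense iff each limit projection restricts to a surjection onto $\Inn(Q_\lambda)$. Here $\rho_\lambda \circ \Phi = (\pi_\lambda)_*$, and $(\pi_\lambda)_*$ is surjective because every generator $S_{y'}$ of $\Inn(Q_\lambda)$ equals $(\pi_\lambda)_*(S_y)$ for any $y \in \pi_\lambda^{-1}(y')$, which is nonempty by surjectivity of $\pi_\lambda$. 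Hence $\Phi(\Inn(Q))$ meets every slim basic open set and is dense.

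The load-bearing point throughout is surjectivity: it is what makes $f_*$ well defined (exactly the reason $\Inn$ is only functorial on surjective homomorphisms) and what yields the surjectivity of $(\pi_\lambda)_*$ needed for density. Everything else — the cone condition, injectivity via separation of points, and the density criterion — is routine. I expect the only step demanding genuine care is verifying the intertwining relation $f \circ g = f_*(g) \circ f$ on generators and confirming that it is precisely what drives both well-definedness and injectivity.
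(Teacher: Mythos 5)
Your proposal is correct and takes essentially the same approach as the paper: induce homomorphisms $\Inn(Q)\to\Inn(Q_\lambda)$ from the surjective projections $\pi_\lambda$, obtain the map into $\pInn_D(Q)=\varprojlim\Inn(Q_\lambda)$ from the universal property of the limit, and prove density by showing the image meets every slim basic open set. If anything, your write-up is more careful than the paper's, which dismisses injectivity as ``easily seen'' where you supply the intertwining relation $f\circ g=f_*(g)\circ f$ (also the correct justification for well-definedness of $f_*$) together with separation of points by the coordinate projections.
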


\begin{proof}
Denote projections $\pi_\lambda : \pInn_D(Q) \to \Inn(Q_\lambda).$ For an inner automorphism $S_x \in \Inn(Q)$ where $q \mapsto q \triangleleft x,$ there are induced inner automorphisms $S_{x_\lambda} \in \Inn(Q_\lambda)$ by $q \mapsto q \triangleleft x_\lambda$ where $\pi_\lambda(x) = x_\lambda$ for each $\lambda \in \Lambda,$ so there are induced maps $\rho_\lambda : \Inn(Q) \to \Inn(Q_\lambda)$ where $S_x \mapsto S_{x_\lambda}.$ Then there is a unique homomorphism $\Psi: \Inn(Q) \to \pInn_D(Q)$ by the universal property of limits. Moreover, it is easily seen that $\Psi$ is injective. We conclude by showing $\Psi(\Inn(Q)) \subseteq \pInn_D(Q)$ is dense.

Let $x \in \Psi(\Inn(Q))^c$ and let $y \in \Inn(Q_i)$ such that $\pi_i^{-1}(y)$ is a neighborhood of $x.$ We have $\pi_i(x) = y,$ so let $z \in \rho_i^{-1}(y).$ Then $\pi_i(\Psi(z)) = y,$ meaning $\Psi(Z) \in \pi_i^{-1}(y)$, or $\pi_i^{-1}(y) \cap \Psi(\Inn(Q)) \neq \varnothing,$ so $x$ is a limit point of $\Psi(\Inn(Q)).$ Thus, $\Psi(\Inn(Q)) \subseteq \pInn_D(Q)$ is dense.
\end{proof}

\begin{counterexample}\label{nonprofiniteInn}
Proposition \ref{profinitesubgroupisclosed} and Proposition \ref{Inn_dense} now allow us to construct a profinite quandle whose algebraic inner automorphism groups is not profinite.

Let $M_n$ be the subquandle of $\Conj(\mathfrak{S}_n)$ consisting of all $2$-cycles, for $n \geq 2$.  (As an aside, $M_3 = T$).  Observe $\Inn(M_n) \cong \mathfrak{S}_n$ and that symmetries are the transpositions. Now consider $Q = \prod_{n=2}^\infty M_n$. $Q$ is profinite by Theorem \ref{arbitraryproducts}, but for simplicity we consider its profiniteness being witnessed by it being the limit of
\[ M_2 \leftarrow M_2\times M_3 \leftarrow M_2\times M_3\times M_4 \leftarrow \ldots. \]
\noindent  (Here the map from the $n+1$-fold product to the $n$-fold product is projection onto the first $n$ coordinates.)

$\Inn(Q)$ is generated by the symmetries $S_{\langle q_i\rangle_{i=2}^\infty} = \langle S_{q_i}\rangle_{i=2}^\infty$, each of which is of order 2, as each $S_{q_i}$ is of order 2. And the same applies to the finite quandles $\prod_{n=2}^N M_n$ in the directed system.

By Proposition \ref{Inn_dense} it is a dense subgroup of $\pInn_D(Q)$, and thus by Proposition \ref{profinitesubgroupisclosed} would be all of $\pInn(Q)$ were it profinite.

However, it is not all of $\pInn_D(Q)$.  To see this we need to find both $\Inn(Q)$ and $\pInn_D(Q)$.  

Now, $\Inn(\prod_{n=2}^N M_n)$ is the subgroup of $\prod_{n=2}^N \Inn(M_n) = \prod_{n=2}^N \mathfrak{S}_n$ generated by elements which are $N$-tuples of transpositions.  And $\Inn(Q)$ is the subgroup of $\prod_{n=2}^\infty \Inn(M_n) = \prod_{n=2}^\infty \mathfrak{S}_n$ generated by sequences of transpositions, one in each symmetric group. Thus, as elements of the infinite product of the symmetric groups, the coordinates of an element of $\Inn(Q)$ can all be expressed as products of $k$ transpositions, for some fixed $k$. 

However, $\pInn_D(Q)$ is the limit of the sequence of $\Inn(\prod_{n=2}^N M_n)$'s.  Because an element of a symmetric group expressible as a product of $k$ transpositions can also be expressed as a product of $k+2j$ transpositions for $j=0,1,2,...$ this limit contains elements like
\[\ell = \langle \ell_n \rangle_{n=2}^\infty,\]
where 
\[\ell_n = (1 2 \ldots 2\lfloor \frac{n}{2} \rfloor). \]

The projection of this element onto $\Inn(\prod_{n=2}^N M_n)$ has coordinates all of which which can be expressed as a product of $2\lfloor \frac{n}{2} \rfloor$ transpositions.

But $\ell$ is not an element of $\Inn(Q)$, as the number of transpositions need to express its coordinates is unbounded.
\end{counterexample}

While $\pInn_D(Q)$ is dependent on the diagram used witness profiniteness of $Q,$ there are certain properties that are true independent of diagram. This is especially useful because we are afforded advantages from a profinite perspective that we would otherwise lack with the standard inner automorphism group

\begin{proposition}
Let $Q$ be a profinite quandle with inverse system $(\{Q_\lambda\}_{\lambda \in \Lambda},\{\varphi_{\a\b}\}_{\a\leq\b\in\Lambda})$. Then the profinite inner automorphism group $\pInn_D(Q)$ acts continuously on $Q$.
\end{proposition}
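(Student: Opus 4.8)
The plan is to define the action coordinatewise and then verify, in turn, that it is well defined, that it is genuinely a right action, and finally that it is continuous. Writing the induced inverse system of inner automorphism groups as $(\{\Inn(Q_\lambda)\}_{\lambda\in\Lambda},\{\psi_{\a\b}\}_{\a\leq\b\in\Lambda})$ with $\psi_{\a\b}:=\Inn(\varphi_{\a\b})$, so that $\pInn_D(Q)=\varprojlim_\lambda \Inn(Q_\lambda)$, I would set
\[
q\cdot g := (q_\lambda \cdot g_\lambda)_{\lambda\in\Lambda}
\qquad\text{for } q=(q_\lambda)_\lambda\in Q,\ g=(g_\lambda)_\lambda\in\pInn_D(Q),
\]
where $q_\lambda\cdot g_\lambda$ denotes the right action of $\Inn(Q_\lambda)$ on the finite quandle $Q_\lambda$.

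The key to well-definedness is an equivariance property of the functor $\Inn$ on the category of surjective quandle homomorphisms (which is also why the standing surjectivity assumption on transition maps is needed for $\psi_{\a\b}$ to be defined at all). On symmetries the induced map is $\psi_{\a\b}(S_y)=S_{\varphi_{\a\b}(y)}$, so for $x\in Q_\b$,
\[
\varphi_{\a\b}(x\cdot S_y)=\varphi_{\a\b}(x\tl y)=\varphi_{\a\b}(x)\tl\varphi_{\a\b}(y)=\varphi_{\a\b}(x)\cdot S_{\varphi_{\a\b}(y)}=\varphi_{\a\b}(x)\cdot\psi_{\a\b}(S_y).
\]
A short induction on word length, using that $\psi_{\a\b}$ is a homomorphism, upgrades this to $\varphi_{\a\b}(x\cdot h)=\varphi_{\a\b}(x)\cdot\psi_{\a\b}(h)$ for every $h\in\Inn(Q_\b)$. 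Applying it with $x=q_\b$, $h=g_\b$ and invoking the compatibility conditions $q_\a=\varphi_{\a\b}(q_\b)$ and $g_\a=\psi_{\a\b}(g_\b)$ yields $\varphi_{\a\b}(q_\b\cdot g_\b)=q_\a\cdot g_\a$, so $q\cdot g$ indeed lies in $Q$. The action axioms $q\cdot e=q$ and $q\cdot(gh)=(q\cdot g)\cdot h$ then hold because they hold in each finite coordinate.

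For continuity I would use that $Q$ carries the subspace topology from $\prod_\lambda Q_\lambda$, so the action map $a:Q\times\pInn_D(Q)\to Q$ is continuous if and only if $\pi_\mu\circ a$ is continuous for every $\mu\in\Lambda$, where $\pi_\mu:Q\to Q_\mu$ is the projection. Denoting by $\pi_\mu^{\Inn}:\pInn_D(Q)\to\Inn(Q_\mu)$ the corresponding group projection and by $a_\mu:Q_\mu\times\Inn(Q_\mu)\to Q_\mu$ the action on the $\mu$-th factor, the coordinatewise definition gives the factorization
\[
\pi_\mu\circ a = a_\mu\circ(\pi_\mu\times\pi_\mu^{\Inn}).
\]
Here $\pi_\mu$ and $\pi_\mu^{\Inn}$ are continuous as projections out of inverse limits, hence so is their product, while $a_\mu$ is automatically continuous because $Q_\mu$ and $\Inn(Q_\mu)$ are finite discrete spaces. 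Thus $\pi_\mu\circ a$ is a composite of continuous maps for every $\mu$, and $a$ is continuous.

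I expect the only genuinely delicate point to be the well-definedness step: one must check that $\psi_{\a\b}=\Inn(\varphi_{\a\b})$ is a bona fide group homomorphism (this uses surjectivity of the transition maps) and that it intertwines the two actions via the equivariance identity above. Once this is secured, the continuity argument is essentially formal, since the slim basis together with the finite-discrete structure of the factors makes each coordinate action trivially continuous.
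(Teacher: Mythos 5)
Your proof is correct and follows essentially the same route as the paper's: the action is defined coordinatewise, continuity is obtained formally from the discreteness of the finite factors together with the product/subspace structure (your coordinate-projection factorization $\pi_\mu\circ a = a_\mu\circ(\pi_\mu\times\pi_\mu^{\Inn})$ is just a repackaging of the paper's restriction of $\prod\rho_\lambda$), and well-definedness comes from functoriality of $\Inn$ on surjective homomorphisms. If anything, you are more careful than the paper, which disposes of well-definedness with the phrase ``by commutativity of the diagrams $D$ and $\Inn(D)$'' where you spell out the equivariance identity $\varphi_{\a\b}(x\cdot h)=\varphi_{\a\b}(x)\cdot\psi_{\a\b}(h)$ and the action axioms explicitly.
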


\begin{proof}
Define $\rho_\lambda : \Inn(Q_\lambda) \times Q_\lambda \to Q_\lambda$ by $(\varphi, q) \mapsto \varphi(q).$ Each $\rho_\lambda$ is continuous as both spaces are discrete. Now define $\rho: \prod \Inn(Q_\lambda) \times \prod Q_\lambda \to \prod Q_\lambda$ by $\rho = \prod \rho_\lambda.$ Then $\rho$ is continuous by construction, and $\pInn_D(Q) \times Q \subseteq \prod \Inn(Q_\lambda) \times \prod Q_\lambda$ is a subspace, meaning the restriction $\rho|_{\pInn_D(Q) \times Q}$ is also continuous. Furthermore, for $(\varphi, q) = ((\prod_{\lambda \in \Lambda} \varphi_{q'_\lambda}), (\prod_{\lambda \in \Lambda} q_\lambda))$, we have
\[\rho((\varphi, q)) = \prod_{\lambda \in \Lambda} \rho_\lambda(\varphi_{q'_\lambda}, q_\lambda) = \prod_{\lambda \in \Lambda} \varphi_{q'_\lambda}(q_\lambda) \in Q\]
by commutativity of the diagrams $D$ and $\Inn(D)$. Thus $\rho|_{\pInn_D(Q) \times Q}$ is precisely the action by $\pInn_D(Q)$ on $Q$.  
\end{proof}

While $\pInn_D(Q)$ has several nice properties, it is still a construction dependent on choice of diagram. The profinite completion $\widehat{\Inn(Q)}$ of $\Inn(Q)$ admits many of the same properties while arising independent of choice of diagram.

\begin{proposition}
If $Q$ is a profinite quandle, then $\widehat{\Inn(Q)}$ acts continuously on $Q.$
\end{proposition}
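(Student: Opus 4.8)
The plan is to route the desired action through the profinite group $\pInn_D(Q)$ for some auxiliary diagram $D$ witnessing profiniteness of $Q$, and then transfer it to $\widehat{\Inn(Q)}$ via the universal property of the profinite completion. First I would recall from the proof of Proposition \ref{Inn_dense} that the induced maps $\rho_\lambda : \Inn(Q) \to \Inn(Q_\lambda)$, $S_x \mapsto S_{x_\lambda}$, assemble by the universal property of the inverse limit into a group homomorphism $\Psi : \Inn(Q) \to \pInn_D(Q)$. Since $\pInn_D(Q) = \varprojlim \Inn(Q_\lambda)$ is, by its very construction, an inverse limit of finite groups and hence profinite, the universal property of the profinite completion (see \cite{wilson1998profinite}) supplies a unique continuous homomorphism $\widehat{\Psi} : \widehat{\Inn(Q)} \to \pInn_D(Q)$ with $\widehat{\Psi}$ compatible with the canonical map $\Inn(Q) \to \widehat{\Inn(Q)}$.

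Next I would define the action of $\widehat{\Inn(Q)}$ on $Q$ as the composite
\[
\widehat{\Inn(Q)} \times Q \xrightarrow{\;\widehat{\Psi} \times \Id\;} \pInn_D(Q) \times Q \longrightarrow Q,
\]
where the second arrow is the continuous action of $\pInn_D(Q)$ on $Q$ established in the proposition just proved. Continuity of the composite is then immediate, since $\widehat{\Psi}$ is continuous, so $\widehat{\Psi} \times \Id$ is continuous, and it is post-composed with a continuous map. That the composite is a genuine action, and not merely a continuous map, follows because $\widehat{\Psi}$ is a group homomorphism and the second arrow is an action of $\pInn_D(Q)$.

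It remains to verify that this action extends the tautological action of $\Inn(Q)$ on $Q$ by automorphisms, and this is the one point needing care. Restricting along $\Inn(Q) \to \widehat{\Inn(Q)}$ reduces the claim to showing that $\Inn(Q) \xrightarrow{\Psi} \pInn_D(Q)$ followed by the $\pInn_D(Q)$-action recovers the original action; since all maps involved are group homomorphisms and the actions are homomorphic, it suffices to check this on the generating symmetries. For $g = S_x$ with $x = (x_\lambda)_{\lambda \in \Lambda}$ and $q = (q_\lambda)_{\lambda \in \Lambda} \in Q = \varprojlim Q_\lambda$, one has $\Psi(g) = (S_{x_\lambda})_{\lambda \in \Lambda}$, whose coordinatewise action sends $q$ to $(q_\lambda \tl x_\lambda)_{\lambda \in \Lambda}$; this equals $g(q) = q \tl x$ because each projection $\pi_\lambda$ is a quandle homomorphism, giving $\pi_\lambda(q \tl x) = q_\lambda \tl x_\lambda$.

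The main obstacle I anticipate is not a single hard step but the bookkeeping around two points: confirming that the universal property of the profinite completion genuinely applies (which it does, as the target $\pInn_D(Q)$ is profinite by construction), and confirming that the resulting action on $Q$ does not depend on the auxiliary diagram $D$. The latter is guaranteed because both the source $\widehat{\Inn(Q)}$ and its tautological action on $Q$ are defined without reference to $D$, even though the intermediate group $\pInn_D(Q)$ through which the argument passes is diagram-dependent.
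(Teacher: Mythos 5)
Your proposal is correct and takes essentially the same approach as the paper: both construct a continuous homomorphism $\widehat{\Inn(Q)} \to \pInn_D(Q)$ via universal properties and then obtain continuity of the action by factoring $\widehat{\Inn(Q)} \times Q \to Q$ through the continuous $\pInn_D(Q)$-action established in the preceding proposition. The only cosmetic difference is the order of the two universal-property applications (the paper first factors each $\rho_\lambda:\Inn(Q)\to\Inn(Q_\lambda)$ through the completion and then uses the universal property of the limit, whereas you apply the completion's universal property directly against the profinite target $\pInn_D(Q)$), which by uniqueness yields the same map; your extra verifications of compatibility with the tautological $\Inn(Q)$-action and of independence from $D$ are details the paper leaves implicit.
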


\begin{proof}

Let $(\{Q_\lambda\}_{\lambda \in \Lambda},\{\varphi_{\a\b}\}_{\a \leq \b \in \Lambda})$ be an inverse system for $Q$ over a directed poset $\Lambda$ and denote the projections $\pi_\lambda : Q \to Q_\lambda$. For an inner automorphism $S_x \in \Inn(Q)$ where $q \mapsto q \triangleleft x,$ there are induced inner automorphisms $S_{x_\lambda} \in \Inn(Q_\lambda)$ by $q \mapsto q \triangleleft x_\lambda$ where $\pi_\lambda(x) = x_\lambda$ for each $\lambda \in \Lambda,$ so there are induced maps $\rho_\lambda : \Inn(Q) \to \Inn(Q_\lambda)$ where $S_x \mapsto S_{x_\lambda}.$

Therefore, by the universal property of profinite completions, there is a map $\Inn(Q) \to \widehat{\Inn(Q)}$, which induces maps $\widehat{\Inn(Q)} \to \Inn(Q_\lambda)$ for each $\lambda \in \Lambda.$ So by the universal property of limits, there exists a unique continuous homomorphism $\Psi : \widehat{\Inn(Q)} \to \pInn_D(Q)$ - where $D$ is the diagram of the above inverse system - making the diagram below commute:

\[\begin{tikzcd}
	& {\widehat{\Inn(Q)}} \\
	& {\pInn_D(Q)} \\
	{\Inn(Q_\a)} && {\Inn(Q_\b)}
	\arrow["{\rho_\b}"{description}, from=2-2, to=3-3]
	\arrow["{\rho_\a}"{description}, from=2-2, to=3-1]
	\arrow["{\hat\varphi_{ij}}"', from=3-3, to=3-1]
	\arrow["{}", from=1-2, to=3-3]
	\arrow["{}"', from=1-2, to=3-1]
	\arrow["{\exists!\Psi}", dashed, from=1-2, to=2-2]
\end{tikzcd}\]

But then the action $\widehat{\Inn(Q)} \times Q \to Q$ factors through continuous maps

\[\begin{tikzcd}
	{\widehat{\text{Inn}(Q)} \times Q} && {\hspace{5mm}Q\hspace{5mm}} \\
	& {\widehat{\text{Inn}}_D(Q) \times Q}
	\arrow[from=2-2, to=1-3]
	\arrow["{\Psi \times \Id}"', from=1-1, to=2-2]
	\arrow[from=1-1, to=1-3]
\end{tikzcd}\]
Thus, $\widehat{\Inn(Q)}$ acts continuously on $Q$.
\end{proof}

As in the finite setting where unions of orbits of the action of $\Inn(Q)$ give rise of subquandles, in the profinite setting we have

\begin{proposition}
    If $Q$ is a profinite quandle, and $C$ is a compact subset of $Q$, then $Q_C$, the union of the orbits of the elements of $C$ under the action of $\widehat{\Inn(Q)}$, is a profinite subquandle of $Q$.
\end{proposition}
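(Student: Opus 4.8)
The plan is to reduce the claim to Proposition \ref{profinitesubisclosed}, which tells us that a subquandle of a profinite quandle is profinite precisely when it is closed. Thus it suffices to show two things: that $Q_C$ is a subquandle of $Q$, and that $Q_C$ is closed in $Q$. The compactness of $\widehat{\Inn(Q)}$ will supply the closedness, while the fact that the symmetries $S_y$ live inside $\widehat{\Inn(Q)}$ and act on $Q$ as $x \mapsto x \tl y$ will supply the subquandle property.

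First I would verify that $Q_C$ is a subquandle. By construction $Q_C$ is a union of $\widehat{\Inn(Q)}$-orbits, hence is invariant under the right action of $\widehat{\Inn(Q)}$ on $Q$. For each $y \in Q$, the symmetry $S_y$ is an element of $\Inn(Q)$, and its image $\bar S_y$ under the canonical map $\Inn(Q) \to \widehat{\Inn(Q)}$ acts on $Q$ exactly as $x \mapsto x \tl y$; this is how the continuous action of $\widehat{\Inn(Q)}$ was built in the preceding proposition, since it restricts to the given action of $\Inn(Q)$. Consequently, if $x = c \cdot g \in Q_C$ with $c \in C$ and $g \in \widehat{\Inn(Q)}$, then for any $y \in Q$ we have $x \tl y = c \cdot (g \bar S_y)$ and $x \tl^{-1} y = c \cdot (g \bar S_y^{-1})$, both of which lie in the orbit of $c$ and hence in $Q_C$. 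In particular $Q_C$ is closed under $\tl$ and $\tl^{-1}$, so it is a subquandle.

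Next I would establish closedness. Let $\alpha : \widehat{\Inn(Q)} \times Q \to Q$ denote the continuous action map. Restricting the first factor to $C$ gives a continuous map $\widehat{\Inn(Q)} \times C \to Q$, and its image is precisely $\bigcup_{c \in C} c \cdot \widehat{\Inn(Q)} = Q_C$. Since $\widehat{\Inn(Q)}$ is profinite it is compact, and $C$ is compact by hypothesis, so the product $\widehat{\Inn(Q)} \times C$ is compact. Therefore its continuous image $Q_C$ is compact, and as $Q$ is a Stone space (hence Hausdorff), $Q_C$ is closed in $Q$. Applying Proposition \ref{profinitesubisclosed} to the closed subquandle $Q_C$ then yields that $Q_C$ is profinite.

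The point requiring the most care is the first step: one must confirm that the abstract group action of $\widehat{\Inn(Q)}$ genuinely realizes the quandle operation through the symmetries, so that orbit-invariance translates into closure under $\tl$ and $\tl^{-1}$. The compactness argument, by contrast, is the clean punchline, and it is here that taking orbits under the full profinite completion $\widehat{\Inn(Q)}$ — rather than under the merely dense subgroup $\Inn(Q)$ — is essential, since it is $\widehat{\Inn(Q)}$ that is compact.
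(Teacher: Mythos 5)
Your proposal is correct and takes essentially the same route as the paper: both establish that $Q_C$ is a subquandle via its invariance under the $\widehat{\Inn(Q)}$-action (hence under the symmetries, realized through the map $\Inn(Q)\to\widehat{\Inn(Q)}$), obtain closedness by exhibiting $Q_C$ as the continuous image of the compact space $C\times\widehat{\Inn(Q)}$ under the action map inside the Hausdorff space $Q$, and conclude by Proposition \ref{profinitesubisclosed}. The only difference is expository: you spell out the compatibility of the symmetries with the completed action, a point the paper dispatches with an \emph{a fortiori}.
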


\begin{proof}
    The set $Q_C$ is a subquandle since it is closed under the action of $\widehat{\Inn(Q)}$, and thus {\em a fortiori} under the actions of the symmetries of its own elements and their inverses.  It is also the image of the action map restricted to $C\times\widehat{\Inn(Q)}$.  The continuous image of a compact space is compact and $Q$ is Hausdorff, hence $Q_C$ is closed.  Thus by Proposition \ref{profinitesubisclosed} it is a profinite subquandle.
\end{proof}

Observe that if the hypothesis of compactness were omitted or $\widehat{\Inn(Q)}$ were replaced with $\Inn(Q)$, the union of orbits would still be a subquandle of $Q$ but it would not in general be profinite.

\section{(Profinitely) Algebraically Connected Profinite Quandles}

 We establish a characterization of profinite quandles with a single orbit under action by the profinite completion of their inner automorphism groups analogous to that of Ehrman et al. \cite{ehrman2008toward} in the finite case.

\begin{definition} \label{pacq}
A profinite quandle $Q$ is said to be \textit{(profinitely) algebraically connected} if there is a single orbit under the action by $\widehat{\Inn(Q)}.$
\end{definition}

 As we saw in Counterexample \ref{nonprofiniteInn} above, the inner automorphism group of a profinite quandle is not in general a profinite group.  In the constructions which follow, we will see that actions of profinite analogues of the inner automorphism group give results in the profinite context closely analogous to those involving actions of the inner automorphism group in the finite setting. Therefore in the profinite context, despite the conflict with the usual usage, we will drop the adverb ``profinitely'' and simple call quandles fitting Definition \ref{pacq} ``algebraically connected profinite quandles.''

\begin{definition}
If $G$ is a topological group and $S \subseteq G$, we say that $S$ \textit{topologically generates} $G$ if the topological closure of the subgroup generated by $S$ is equal to $G$.
\end{definition}

The previous definition will be used throughout our discussion of $\widehat{\Inn(Q)}$ as a topological group

For an algebraically connected profinite quandle $Q,$ any choice of $q \in Q$ induces a $\widehat{\Inn(Q)}$-equivariant bijection between $Q$ and $H \backslash \widehat{\Inn(Q)}$ where $H$ is the stabilizer of $q.$ Therefore, we may represent $Q$ by $(\{Hg_\delta\}_{\delta \in \Delta}, \triangleleft)$ where $Hg_1 \triangleleft Hg_2 = Hg_3$ if $q_1 \triangleleft q_2 = q_3.$ We define an augmentation map $|\cdot|: Q = H\backslash \widehat{\Inn(Q)} \to \widehat{\Inn(Q)}$ by $|Hg_\delta| = g$ where $g \in \widehat{\Inn(Q)}$ such that $x \cdot g = x \triangleleft Hg_\delta$ for all $x \in Q$. To distinguish between the augmentation map and the order of a group, we will denote $|H|$ as the order of the subgroup $H$ and $|Hh|$ as the augmentation of $H$ as a right coset in $H \backslash \widehat{\Inn(Q)}.$ 

\begin{theorem}
Let $Q$ be an algebraically connected profinite quandle, let $H$ be the stabilizer of $q \in Q$ under action by $\widehat{\Inn(Q)}$, and let $\{g_\delta\}_{\delta \in \Delta}$ be all coset representatives of $H$ not in $H$. Then $|Hh| \in Z(H)$ and $\widehat{\Inn(Q)}$ is topologically generated by $\{|Hh|, \{|Hg_\delta|\}_{\delta \in \Delta}\}$, where $|Hg_\delta| = g_\delta^{-1}|Hh|g_\delta.$
\end{theorem}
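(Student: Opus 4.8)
The theorem is the profinite analogue of Ehrman et al.'s characterization (Theorem stated earlier in the excerpt for the finite case), so the natural strategy is to transport that finite argument across the equivariant bijection $Q \cong H\backslash\widehat{\Inn(Q)}$ and then supply the topological bookkeeping that the profinite setting demands. First I would fix $q\in Q$ with stabilizer $H$ and use the $\widehat{\Inn(Q)}$-equivariant bijection to identify $Q$ with the coset space $H\backslash\widehat{\Inn(Q)}$, transporting the quandle operation to $Hg_1\triangleleft Hg_2 = Hg_1\,g_2^{-1}\lvert Hh\rvert\, g_2$ via the augmentation map $\lvert\cdot\rvert$. The key algebraic relations, namely $\lvert Hg_\delta\rvert = g_\delta^{-1}\lvert Hh\rvert g_\delta$ and $\lvert Hh\rvert\in Z(H)$, are identities among augmentations, so I would derive them directly from the augmented-quandle axioms (AQ1) and (AQ2): axiom (AQ2) applied to the action of $g_\delta$ on the base coset gives the conjugation formula, while $\lvert Hh\rvert\in Z(H)$ follows because $h\in H$ fixes $q$, so $\lvert Hh\rvert$ commutes with every augmentation fixing $q$, i.e.\ with every element of $H$.

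\textbf{Topological generation.} The substantive difference from the finite case is that one cannot conclude that $\widehat{\Inn(Q)}$ is literally generated by the augmentations; one can only hope for \emph{topological} generation, which is exactly what the statement asserts. Here I would use Proposition \ref{Inn_dense} (or rather its analogue for $\widehat{\Inn(Q)}$): the symmetries $S_x$, $x\in Q$, generate $\Inn(Q)$, whose image is dense in $\widehat{\Inn(Q)}$. Under the identification $Q\cong H\backslash\widehat{\Inn(Q)}$ each symmetry $S_{Hg_\delta}$ corresponds to the augmentation $\lvert Hg_\delta\rvert$, so the set $\{\lvert Hh\rvert\}\cup\{\lvert Hg_\delta\rvert\}_{\delta\in\Delta}$ generates a subgroup whose closure contains the closure of the image of $\Inn(Q)$, which is all of $\widehat{\Inn(Q)}$. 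Thus the augmentations topologically generate $\widehat{\Inn(Q)}$, giving the final clause.

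\textbf{Main obstacle.} The delicate point is ensuring that the augmentation map $\lvert\cdot\rvert$ is well defined and continuous on the profinite coset space, and that the equivariant bijection $Q\cong H\backslash\widehat{\Inn(Q)}$ is a genuine homeomorphism of Stone spaces rather than merely a set bijection — since $H$ must be a \emph{closed} subgroup for $H\backslash\widehat{\Inn(Q)}$ to carry the correct quotient topology. I would establish this by noting that $H$, being a stabilizer of a point under a continuous action on a Hausdorff space, is closed, hence profinite by Proposition \ref{profinitesubgroupisclosed}, so the coset space is compact Hausdorff and the continuous equivariant bijection from the compact $Q$ is automatically a homeomorphism. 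The remaining verifications — that the conjugation and centrality relations survive passage to the inverse limit, and that density is preserved under the continuous homomorphism $\Inn(Q)\to\widehat{\Inn(Q)}$ — are then routine, so I expect the crux of the argument to lie in setting up this topological identification correctly rather than in the algebraic identities themselves.
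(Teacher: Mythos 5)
Your proposal is correct and takes essentially the same route as the paper's own proof: the augmentations are exactly the symmetries, which algebraically generate $\Inn(Q)$, and density of $\Inn(Q)$ in $\widehat{\Inn(Q)}$ then yields topological generation, while the relations $|Hg_\delta| = g_\delta^{-1}|Hh|g_\delta$ and $|Hh| \in Z(H)$ follow from the augmented-quandle conjugation identity (the paper cites Claim 4.4 of Ehrman et al., i.e.\ (AQ2)) applied with $g = g_\delta$ and with $g \in H$ respectively. The extra topological bookkeeping you anticipate (closedness of $H$, the coset identification being a homeomorphism) is more care than the paper itself takes and does not alter the argument.
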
 

\begin{proof}
First, since $\{|Hh|, \{|Hg_\delta|\}_{\delta \in \Delta}\}$ attains a representative for each coset of $H,$ the representation from the discussion above implies that $\{|Hh|, \{|Hg_\delta|\}_{\delta \in \Delta}\}$ algebraically generates $\Inn(Q)$.  Since $\Inn(Q)$ is dense in $\widehat{\Inn(Q)}$, we have $\{|Hh|, \{|Hg_\delta|\}_{\delta \in \Delta}\}$ topologically generates $\widehat{\Inn(Q)}$.

Now, by Claim 4.4 of~\cite{ehrman2008toward}, for all $g \in \widehat{\Inn(Q)}$ we have $|Hg_\delta g| = g^{-1}|Hg_\delta|g$. Then  $|Hg_\delta| = |Hhg_\delta| = g_\delta^{-1}|Hh|g_\delta.$ For $g \in H$, we have $|Hh| = |Hhg| = g^{-1}|Hh|g$, meaning $|Hh| \in Z(H)$.
\end{proof}

\begin{proposition}
For a group $G$, a proper subgroup $H < G$, and some $h \in Z(H),$ define $\triangleleft : H \backslash G \times H \backslash G \to H \backslash G$ by $(Hg, Hk) \mapsto Hgk^{-1}hk$ and $\triangleleft^{-1} : H \backslash G \times H \backslash G \to H \backslash G$ by $(Hg, Hk) \mapsto Hgk^{-1}h^{-1}k$. Then $Q_{G,H,h} := (H \backslash G, \triangleleft, \triangleleft^{-1})$ is a quandle, called the \textit{right coset quandle of G with respect to H}.
\end{proposition}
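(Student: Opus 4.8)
The plan is to verify, in order, that both operations descend to well-defined maps on right cosets, and then that the resulting triple satisfies the three quandle axioms by direct computation in $G$, reducing modulo $H$ on the left only at the very end. Throughout I would read the definition as $Hg \triangleleft Hk = H\,gk^{-1}hk$, so that the second coset contributes the conjugate $k^{-1}hk$.

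First I would address well-definedness, which I expect to be the only genuinely delicate point and the main obstacle. Suppose $Hg = Hg'$ and $Hk = Hk'$, so that $g' = ag$ and $k' = bk$ for some $a,b \in H$. Expanding the representative of the output gives
\[
g'(k')^{-1}hk' = ag\,k^{-1}b^{-1}hb\,k,
\]
and here I would invoke $h \in Z(H)$ to replace $b^{-1}hb$ by $h$, leaving $a\,(gk^{-1}hk)$. Since $a \in H$ we have $Ha = H$, so the leading factor is absorbed and $Hg'(k')^{-1}hk' = H\,gk^{-1}hk$. Thus $\triangleleft$ is independent of the chosen representatives. The identical argument with $h^{-1}$ in place of $h$, using $h^{-1} \in Z(H)$, handles $\triangleleft^{-1}$. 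This is precisely the step where both the use of right cosets (to absorb the factor from the first coordinate) and the centrality hypothesis $h \in Z(H)$ (to absorb the change in the second coordinate) are essential.

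With well-definedness established, the three axioms are routine. For idempotency (\textbf{Q1}), $Hg \triangleleft Hg = H\,gg^{-1}hg = Hhg = Hg$, using $h \in H$. For right-invertibility (\textbf{Q2}), I would substitute and cancel:
\[
(Hg \triangleleft Hk)\triangleleft^{-1} Hk = H\,gk^{-1}hk\,k^{-1}h^{-1}k = H\,gk^{-1}hh^{-1}k = Hg,
\]
with the symmetric computation showing $(Hg \triangleleft^{-1} Hk)\triangleleft Hk = Hg$, so $\triangleleft^{-1}$ is indeed the right inverse of $\triangleleft$.

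Finally, for right self-distributivity (\textbf{Q3}), the left-hand side expands to $H\,gk^{-1}hk\,l^{-1}hl$, while the right-hand side, after substituting the definitions of $Hg \triangleleft Hl$ and $Hk \triangleleft Hl$ and using the cancellation $(l^{-1}hl)(l^{-1}h^{-1}l) = e$, collapses to the same expression $H\,gk^{-1}hk\,l^{-1}hl$. Matching the two sides completes the verification, and I would remark that this cancellation is the only nontrivial algebraic identity needed once the representatives are written out.
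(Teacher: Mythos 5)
Your verification of the three axioms is correct and is essentially the same direct computation as the paper's proof: idempotency via $Hgg^{-1}hg = Hhg = Hg$, invertibility via the cancellation $kk^{-1}h h^{-1}k = k$, and self-distributivity via the collapse of $(l^{-1}hl)(l^{-1}h^{-1}l)$ to the identity (the paper runs this last computation from the right-hand side to the left, but it is the same identity). The one genuine difference is your well-definedness check, which the paper omits entirely. This is a worthwhile addition rather than padding: the formula $(Hg,Hk)\mapsto Hgk^{-1}hk$ is stated in terms of representatives, and until one checks that $k'=bk$ with $b\in H$ yields the same output, the ``operation'' is not known to be a function, so the axiom computations (including the paper's step where $Hk \triangleleft Hm$ is fed back into $\triangleleft$ via the particular representative $km^{-1}hm$) rest on it. Your analysis also correctly isolates where each hypothesis earns its keep: $h\in Z(H)$ gives $b^{-1}hb=h$, making the conjugate $k^{-1}hk$ literally independent of the representative of the second coordinate, while the use of \emph{right} cosets absorbs the factor $a\in H$ coming from the first coordinate on the left. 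In this respect your proposal is more complete than the published argument.
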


\begin{proof}
Let $Hg,Hk,Hm \in Q_{G,H,h}$ be distinct.

\begin{enumerate}[label=\textbf{(Q\arabic*)}]
\item We have 
    \[Hg \triangleleft Hg = Hgg^{-1}hg = Hhg = Hg.\]
\item We have
    \begin{align*}
        (Hg \triangleleft Hm) \triangleleft (Hk \triangleleft Hm) 
        &= Hgm^{-1}hm \triangleleft Hkm^{-1}hm \\
        &= Hgm^{-1}hmm^{-1}h^{-1}mk^{-1}hkm^{-1}hm \\
        &= Hgk^{-1}hkm^{-1}hm \\
        &= Hgk^{-1}hk \triangleleft Hm \\
        &= (Hg \triangleleft Hk) \triangleleft Hm.
    \end{align*}
\item We have
    \begin{align*}
        (Hg \triangleleft Hk) \triangleleft^{-1} Hk &= Hgk^{-1}hk \triangleleft^{-1} Hk = Hgk^{-1}hkk^{-1}h^{-1}k = Hg, \\
        (Hg \triangleleft^{-1} Hk) \triangleleft Hk &= Hgk^{-1}h^{-1}k \triangleleft Hk = Hgk^{-1}h^{-1}kk^{-1}hk = Hg. \\
    \end{align*}
\end{enumerate}
Thus, $Q_{G,H,h}$ is a quandle.
\end{proof}

Given group homomorphisms, there is an induced quandle homomorphism on their respective right coset quandles.

\begin{lemma}
    For a group $G$, let $H < G$ and $h \in Z(H)$. Define a surjective group homomorphism $\varphi: G \to \Gamma.$ Then there is an induced surjective quandle homomorphism $\hat\varphi: Q_{G,H,h} \to Q_{\Gamma, \varphi(H),\varphi(h)}$ by $Hg \mapsto \varphi(H)\varphi(g)$.
\end{lemma}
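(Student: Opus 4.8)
The plan is to check the four things that ``induced surjective quandle homomorphism'' packs together: that the stated codomain is a legitimate right coset quandle, that $\hat\varphi$ is well defined on cosets, that it intertwines the quandle operations, and that it is onto. All four are short algebraic verifications, so the work is in organizing them rather than in any single estimate.

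The first and most easily overlooked point is that $Q_{\Gamma,\varphi(H),\varphi(h)}$ must actually exist, which by the preceding proposition requires $\varphi(h)\in Z(\varphi(H))$. This follows because $\varphi(H)$ is a subgroup of $\Gamma$ and every element of it is $\varphi(h')$ for some $h'\in H$; since $h\in Z(H)$ we get $\varphi(h)\varphi(h')=\varphi(hh')=\varphi(h'h)=\varphi(h')\varphi(h)$. For well-definedness, if $Hg=Hg'$ then $g'g^{-1}\in H$, hence $\varphi(g')\varphi(g)^{-1}\in\varphi(H)$ and $\varphi(H)\varphi(g')=\varphi(H)\varphi(g)$, so the value of $\hat\varphi$ does not depend on the chosen representative.

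For the homomorphism property I would simply unwind both sides using the defining formula $Hg\triangleleft Hk=Hgk^{-1}hk$ together with multiplicativity of $\varphi$; one finds $\hat\varphi(Hg\triangleleft Hk)=\varphi(H)\varphi(g)\varphi(k)^{-1}\varphi(h)\varphi(k)$, which is exactly $\hat\varphi(Hg)\triangleleft\hat\varphi(Hk)$ computed in $Q_{\Gamma,\varphi(H),\varphi(h)}$ using the central element $\varphi(h)$. The same computation with $h^{-1}$ in place of $h$ handles $\triangleleft^{-1}$, though this is automatic from axiom (\textbf{Q2}) once $\triangleleft$ is preserved. Surjectivity is immediate from surjectivity of $\varphi$: an arbitrary coset is $\varphi(H)\gamma$ with $\gamma=\varphi(g)$ for some $g\in G$, whence $\varphi(H)\gamma=\hat\varphi(Hg)$.

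I do not expect a genuine obstacle; the only step needing a moment's attention is the centrality check $\varphi(h)\in Z(\varphi(H))$, since without it the named codomain would not be a right coset quandle at all and the statement would be vacuous.
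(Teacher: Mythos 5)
Your proof is correct and follows essentially the same route as the paper's: the same direct computation $\hat\varphi(Hg \triangleleft Hk) = \varphi(H)\varphi(g)\varphi(k)^{-1}\varphi(h)\varphi(k)$ for the homomorphism property and the same lifting of $\gamma$ along $\varphi$ for surjectivity. You are in fact slightly more thorough than the paper, which merely asserts $\varphi(h) \in Z(\varphi(H))$ without the one-line check and omits the well-definedness of $Hg \mapsto \varphi(H)\varphi(g)$ on cosets entirely; both of your added verifications are correct and worth recording.
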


\begin{proof}
First, note $\varphi(h) \in Z(\varphi(H)).$ Let $Hg,Hk \in Q_{G,H,h}.$ Then 
\begin{align*}
    \hat\varphi(Hg \triangleleft_G Hk) &= \hat\varphi(Hgk^{-1}hk)  \\
    &= \varphi(H)\varphi(g)\varphi(k^{-1})\varphi(h)\varphi(k) \\
    &= \varphi(H)\varphi(g)\varphi(k)^{-1}\varphi(h)\varphi(k) \\
    &= \varphi(H)\varphi(g) \triangleleft_\Gamma \varphi(H)\varphi(k), \\
\end{align*}
hence $\hat\varphi$ is a quandle homomorphism. 

To see that $\hat\varphi$ is surjective, let $\varphi(H)\gamma \in Q_{\Gamma,\varphi(H),\varphi(h)}.$ Since $\varphi$ is surjective, there exists some $g \in G$ such that $\varphi(g) = \gamma.$ Then $\hat\varphi(Hg) = \varphi(H)\varphi(g) = \varphi(H)\gamma.$

\end{proof}

\begin{theorem}
Let $G$ be a profinite group and $H < G$. Further, for coset representatives $\{g_\delta\}_{\delta \in \Delta}$ of $H$, suppose that $G$ is topologically generated by $\{g_\delta^{-1}hg_\delta\}_{\delta \in \Delta}$ for $ h \in Z(H).$ Then $Q_{G,H,h}$ is an algebraically connected profinite quandle.
\end{theorem}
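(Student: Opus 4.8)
The plan is to establish the two assertions—profiniteness and algebraic connectedness—separately: the first by exhibiting $Q_{G,H,h}$ as an inverse limit of finite coset quandles built from the finite quotients of $G$, and the second by combining the topological generation hypothesis with the continuity of the $\widehat{\Inn(Q)}$-action. Throughout I take $H$ to be closed in $G$; this is forced, since otherwise $H\backslash G$ is not Hausdorff and cannot be profinite, and it is exactly the situation arising from the previous theorem, where $H$ is the (closed) stabilizer of a point. That $Q := Q_{G,H,h}$ is a quandle is the content of the preceding proposition, so only the topological and orbit statements remain.

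For profiniteness, let $\mathcal N$ denote the set of open normal subgroups of $G$, directed by reverse inclusion, so that $G = \varprojlim_{N\in\mathcal N} G/N$ with each $G/N$ finite. Applying the preceding Lemma to the quotient map $\varphi_N : G \to G/N$ yields finite quandles $Q_N := Q_{G/N,\,\varphi_N(H),\,\varphi_N(h)}$ together with surjective quandle homomorphisms $\hat\varphi_N : Q \to Q_N$; for $M \subseteq N$ the quotient $G/M \to G/N$ induces, again by the Lemma, transition homomorphisms $Q_M \to Q_N$, and one checks these assemble into an inverse system compatible with the $\hat\varphi_N$. The universal property of the limit then gives a continuous quandle homomorphism $\Psi : Q \to \varprojlim_{N} Q_N$. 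Injectivity reduces to the identity $\bigcap_{N\in\mathcal N} HN = \overline{H} = H$ for closed $H$ in a profinite group, and surjectivity follows exactly as in the proofs of Propositions \ref{binaryproducts} and \ref{profinitesubisclosed}: $Q$ is compact (a continuous quotient of the compact group $G$), so $\Psi(Q)$ is closed in the Hausdorff space $\varprojlim_N Q_N$, while each $\hat\varphi_N = (\text{proj}_N)\circ\Psi$ is surjective, so $\Psi(Q)$ is dense by Proposition \ref{density}; a dense closed set is everything. Hence $\Psi$ is a continuous bijection from a compact space to a Hausdorff space, i.e. an isomorphism, and $Q \cong \varprojlim_N Q_N$ is profinite.

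For algebraic connectedness, first note that the symmetry $S_{Hk}$ acts on $Q$ by $Hg \mapsto Hg\,(k^{-1}hk)$, i.e. as right multiplication by $k^{-1}hk$; this is well defined on cosets precisely because $h \in Z(H)$, so that $k^{-1}hk$ depends only on $Hk$ and the collection $\{k^{-1}hk : k\in G\}$ equals $\{g_\delta^{-1}hg_\delta\}_{\delta\in\Delta}$. Consequently $\Inn(Q)$ is the image, under the right-multiplication action $G \to \mathrm{Sym}(H\backslash G)$, of the abstract subgroup $N_0 := \langle\, g_\delta^{-1}hg_\delta : \delta\in\Delta\,\rangle$, which by hypothesis is dense in $G$. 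Since the quotient map $G \to H\backslash G$ is open and continuous, it carries the dense set $N_0$ to a dense set, so the orbit $\Inn(Q)\cdot He = \{Hn : n\in N_0\}$ is dense in $Q$.

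It remains to upgrade this to a single orbit under $\widehat{\Inn(Q)}$. Recall from the proposition above that $\widehat{\Inn(Q)}$ acts continuously on $Q$ and that its action extends that of $\Inn(Q)$; moreover the canonical image of $\Inn(Q)$ is dense in $\widehat{\Inn(Q)}$. Fixing $q = He$, the orbit $\widehat{\Inn(Q)}\cdot q$ is the continuous image of the compact set $\widehat{\Inn(Q)} \times \{q\}$, hence compact and therefore closed in the Hausdorff space $Q$. Since it contains the dense orbit $\Inn(Q)\cdot q$, it must equal $Q$, so $Q$ has a single orbit and is algebraically connected. The main obstacle is the profiniteness step: one must verify that the Lemma-induced maps genuinely form an inverse system witnessing $Q$ as the limit, and in particular that injectivity of $\Psi$ hinges on—and so requires—the closedness of $H$; the connectedness argument, by contrast, is a clean density-plus-compactness argument once the symmetries are identified with the conjugates $g_\delta^{-1}hg_\delta$.
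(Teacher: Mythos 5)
Your proof is correct and follows essentially the same route as the paper's: profiniteness via the lemma-induced inverse system of finite coset quandles with the injectivity/compact-image-plus-density argument for $\Psi$, and connectedness from the topological generation hypothesis. Your two elaborations actually sharpen the paper's text: you make explicit that injectivity of $\Psi$ requires $H$ to be closed (the paper's step $\hat\pi_\lambda(q_1)=\hat\pi_\lambda(q_2)\Rightarrow q_1=q_2$ silently uses $\bigcap_{N} HN=\overline{H}=H$, which is exactly the situation when $H$ is a point stabilizer), and you expand the paper's one-line transitivity claim into the rigorous argument that the dense $\Inn(Q)$-orbit is contained in the compact, hence closed, $\widehat{\Inn(Q)}$-orbit, which must therefore be all of $Q$.
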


\begin{proof}
Note coset representative notation is omitted when useful.

Let $(\{G_\lambda\}_{\lambda \in \Lambda},\{\varphi_{\a\b}\}_{\a \leq \b \in \Lambda})$ be an inverse system for $G$ over a directed poset $\Lambda$ and denote the projections $\pi_\lambda : G \to G_\lambda$. We begin by showing that $Q_{G,H,h}$ is profinite. From the above lemma, there are induced surjective maps $\hat\pi_\lambda : Q_{G,H,h} \to Q_{G_\lambda, \pi_\lambda(H),\pi_\lambda(h)}$ and $\hat\varphi_{\a\b}: Q_{G_\b, \pi_\b(H),\pi_\b(h)} \to Q_{G_\a, \pi_\a(H),\pi_\a(h)},$ meaning $(\{Q_{G_\lambda, \pi_\lambda(H),\pi_\lambda(h)}\}_{\lambda \in \Lambda},\{\hat\varphi_{\a\b}\}_{\a\leq\b \in \Lambda})$ forms an inverse system. By the universal property of limits, we obtain a unique continuous quandle homomorphism $\Psi: Q_{G,H,h} \to \varprojlim Q_{G_\lambda, \pi_\lambda(H),\pi_\lambda(h)}$ such that the following diagram commutes.

\[\begin{tikzcd}
	& {Q_{G, H,h}} \\
	& {\varprojlim Q_{G_\lambda, \pi_\lambda(H),\pi_\lambda(h)}} \\
	{Q_{G_\a, \pi_\a(H),\pi_\a(h)}} && {Q_{G_\b, \pi_\b(H),\pi_\b(h)}}
	\arrow["{\rho_\b}"{description}, from=2-2, to=3-3]
	\arrow["{\rho_\a}"{description}, from=2-2, to=3-1]
	\arrow["{\hat\varphi_{\a\b}}"', from=3-3, to=3-1]
	\arrow["{\hat\pi_\b}", from=1-2, to=3-3]
	\arrow["{\hat\pi_\a}"', from=1-2, to=3-1]
	\arrow["{\exists!\Psi}", dashed, from=1-2, to=2-2]
\end{tikzcd}\]

We claim $\Psi$ is an isomorphism. To see that $\Psi$ is injective, let $q_1,q_2 \in Q_{G,H,h}$ such that $\Psi(q_1) = \Psi(q_2).$ Then for all $\lambda \in \Lambda,$
\[\rho_\lambda(\Psi(q_1)) = \rho_\lambda(\Psi(q_2)) \Rightarrow \hat\pi_\lambda(q_1) = \hat\pi_\lambda(q_2) \Rightarrow q_1 = q_2.\]

For surjectivity, we have $Q_{G,H,h}$ is a (topological) quotient of $G$ which is a Stone space, hence $Q_{G,H,h}$ is compact. Then $\Psi(Q_{G,H,h}) \subseteq \varprojlim Q_{G_\lambda, \pi_\lambda(H),\pi_\lambda(h)}$ is also compact.  We have by the diagram that $\rho_\alpha(\Psi(Q_{G,H,h})) = \hat\pi_\alpha(Q_{G,H,h}) = Q_{G_\alpha, \pi_\alpha(H),\pi_\alpha(h)}$ by the surjectivity of $\hat\pi_\alpha$. Hence by \ref{density} the subquandle $\Psi(Q_{G,H,h})$ is dense in $\varprojlim Q_{G_\lambda, \pi_\lambda(H),\pi_\lambda(h)}$ and 
\[Q_{G,H,h} \cong \varprojlim Q_{G_\lambda, \pi_\lambda(H),\pi_\lambda(h)}.\]


Finally, we show that $Q_{G,H,h}$ is algebraically connected. Since $G$ is topologically generated by $\{g_\delta^{-1}hg_\delta\}_{\delta \in \Delta}$, the action on $Q_{G,H,h}$ by $\widehat{\Inn(Q_{G,H,h})}$ is transitive (the action is just the multiplication action on cosets of a subgroup). Thus there is a single orbit under this action and $Q_{G,H,h}$ is algebraically connected.
\end{proof}

The previous two theorems provide a program for constructing all algebraically connected profinite quandles.

\section{Directions for future research}

The results herein leave our knowledge of profinite quandles in a state slightly less satisfactory that that attained in \cite{ehrman2008toward} Ehrman et al. in the finite case.  There the notion of semi-disjoint union gave a way of constructing all finite quandles from algebraically connected finite quandle by iterated semi-disjoint union.  We lack an analogous constuction.  The example of the ``Cantor quandle'', that is the trivial quandle structure on the Cantor set with the usual topology inherited from ${\mathbb R}$ shows that topological considerations must be taken into account:  algebraically the Cantor quandle is an uncountable disjoint union of singleton trivial quandles (and thus a semi-disjoint union with trivial actions), but the topology which makes it profinite cannot be accounted for by the semi-disjoint union construction of \cite{ehrman2008toward} Ehrman et al.  One line of future research would be to investigate the sufficiency of a construction akin to the semi-disjoint union construction, performed not on a set of profinite quandles, but a family of profinite quandles indexed by a Stone space, together with transfinite induction as a means of constructing all profinite quandles from algebraically connected profinite quandles.

A second obvious line of research is the resolution the conjecture of \cite{amsberry2023complementation} Amesbury, et al. which originally motivated the project:  is the lattice of subquandles of a profinite quandle always complemented?  We will remark here that the lattice of closed subquandles of a profinite quandle is not in general complemented.  The Cantor quandle mentioned above provides the needed counterexample. Observe that any subquandle in the Cantor quandle consisting of a single element is closed but does not have a closed complement.  The singleton subquandles do have a complement in the full subquandle lattice -- their set-theoretic complement -- since the subquandle lattice of a trivial quandle is simply the Boolean algebra of its subsets.  A subsidiary question to the conjecture of Amesbury et al. is whether close subquandles of a profinite quandle have complements in the full subquandle lattice.

Finally, the authors hope that the results herein together the interpretation of quandles as a means of encoding monodromy \cite{yetter2003} will prove useful in the applications of profinite quandles to the \'{e}tale homotopy theory of number fields found in forthcoming work of Davis and Schlank \cite{Schlank2023}.

\section{Acknowledgements}

This paper is the result of a program of research begun under the mentorship of the fifth author while the first four authors were participants at Kansas State University's mathematics REU site, SUMaR 2023.  The first four authors wish to thank Kansas State for hospitality, and all of the authors wish to thank the National Science Foundation for its financial support of this project under award \#DMS-2243854.


\bibliography{\jobname}

\end{document}